\newtheorem{proposition}{Proposition}[section]
\newtheorem{lemma}[proposition]{Lemma}
\newtheorem{corollary}[proposition]{Corollary}
\newtheorem{theorem}[proposition]{Theorem}
\theoremstyle{definition}
\newtheorem{definition}[proposition]{Definition}
\theoremstyle{remark}
\newtheorem{remark}[proposition]{Remark}
\newcommand{\Ext}{\mbox{\rm Ext}}
\newcommand{\Hom}{\mbox{\rm Hom}}
\newcommand{\RHom}{\mbox{\rm RHom}}
\newcommand{\im}{\mbox{\rm Im}}
\newcommand{\id}{\mbox{\rm id}}
\newcommand{\Coker}{\mbox{\rm Coker}}
\newcommand{\Cone}{\mbox{\rm Cone}}
\newcommand{\Ker}{\mbox{\rm Ker}}
\newcommand{\DP}{\mbox{\rm DP}}
\newcommand{\Dpd}{\mbox{\rm Dpd}}
\newcommand{\Gpd}{\mbox{\rm Gpd}}
\newcommand{\pd}{\mbox{\rm pd}}
\newcommand{\fd}{\mbox{\rm fd}}
\newcommand{\h}{\mbox{\rm H}}
\newcommand{\s}{\mbox{\rm S}}
\newcommand{\Z}{\mbox{\rm Z}}
\newcommand{\DG}{\mbox{\rm DG}}
\newcommand{\B}{\mbox{\rm B}}
\newcommand{\C}{\mbox{\rm C}}
\newcommand{\p}{\mbox{\rm P}}
\begin{document}

\title{DING PROJECTIVE DIMENSION OF COMPLEXES*}
\author{ZHANPING WANG \ \ \ \ ZHONGKUI LIU}

\footnote[0]{*Supported by National Natural Science Foundation of China (Grant No. 11201377, 11261050) and Program of Science and Technique of Gansu Province (Grant No. 1208RJZA145).}\footnote[0]{Address
correspondence to Zhanping Wang, Department of Mathematics, Northwest Normal University, Lanzhou 730070, PR China.}\footnote[0]{E-mail: wangzp@nwnu.edu.cn (Z.P. Wang),
liuzk@nwnu.edu.cn (Z.K. Liu).}

\date{}\maketitle

\noindent{\footnotesize {\bf Abstract} In this paper, we define and study a notion of Ding projective dimension for complexes of left modules over associative rings. In particular, we consider the class of homologically bounded below complexes of left $R$-modules, and show that Ding projective dimension has a nice functorial description.

\vspace{0.2cm}
\noindent{\footnotesize {2010 {\bf{Mathematics Subject
Classification}}:}
18G35, 55U15, 13D05, 16E30

\noindent{\footnotesize {{\bf{Key words}}:} Ding projective dimension,  Ding projective modules, complexes.

\section{Introduction}
In \cite{Avramov1991}, Avramov and Foxby defined the projective (resp. injective or flat) dimension for unbounded complexes by means of DG-projective (resp. DG-injective or DG-flat) resolutions. A complex $P$ of $R$-modules is called DG-projective if $\Hom _{R}(P, -)$ transforms surjective quasi-isomorphisms into surjective quasi-isomorphisms, which is equivalent to saying that $P$ is a complex of projective $R$-modules and $\Hom _{R}(P, X)$ is exact for every exact complex $X$ by \cite[1.2.P]{Avramov1991}. A DG-projective resolution of $X$ is a quasi-isomorphism
$P\rightarrow X$ with $P$ DG-projective. By \cite[Corollary 3.10]{Enochs1996}, every complex has a surjective DG-projective resolution $P\rightarrow X$. If $X$ is homologically bounded below, then $P$ can be chosen so that $\inf\{i\mid
P_{i}\neq0\}=\inf X$.

Over commutative local rings, Yassemi \cite{Yassemi1995} and Christensen \cite{Christensen2000} introduced a Gorenstein projective dimension for complexes with bounded below homology. In \cite{Veliche2006}, Veliche defined and studied Gorenstein projective dimension for complexes of left $R$-modules over associative ring $R$. Not much later Gorenstein injective and Gorenstein flat dimension for complexes were introduced and studied in \cite{Asadollahi2006, Iacob2009}. These Gorenstein dimensions are related to the Gorenstein rings. General background materials about Gorenstein homological algebra can be found in \cite{Enochs2000, Enochs2001, Rozas1999}.

In \cite{Ding2009}, Ding, Li and Mao introduced and studied strongly Gorenstein flat modules, and several well-known classes of rings are characterized in terms of these modules. A left $R$-module $M$ is called strongly Gorenstein flat if there is an exact sequence
\[\cdots \longrightarrow P_{1}\longrightarrow P_{0}\longrightarrow P_{-1}\longrightarrow P_{-2}\longrightarrow \cdots\]
of projective left $R$-modules with $M=\Coker (P_{0}\longrightarrow P_{-1})$ such that $\Hom (-, \mathcal{F})$ leaves the sequence exact, where $\mathcal{F}$ stands for the class of all flat left $R$-modules. Since strongly Gorenstein flat modules have properties analogous to Gorenstein projective modules, Gillespie \cite{Gillespie2010} called these modules Ding projective modules. For every left $R$-module $M$ over an associative ring $R$, Ding at al. also defined and investigated the strongly Gorenstein flat dimension for modules and rings.

The main purpose of this paper is to introduce and study a concept of Ding projective dimension $\Dpd_{R}(X)$ associated to every complex $X$ of left $R$-modules over an arbitrary associative ring $R$. In particular, we consider the class of homologically bounded below complexes of left $R$-modules, and show that Ding projective dimension has a nice functorial description.

In this paper, $R$ denotes an associative ring with unity. We consistently use the notation from the appendix of \cite{Christensen2000}. In particular, the category of $R$-complexes is denoted $\C(R)$, we use subscripts $\sqsubset$, $\sqsupset$ and $\square$ to denote boundedness conditions, and use subscripts $(\sqsubset)$, $(\sqsupset)$ and $(\square)$ to denote homological boundedness conditions. For example, $\C_{\sqsupset}(R)$ is the full subcategory of $\C(R)$ of bounded below complexes; $\C_{(\sqsupset)}(R)$ is the full subcategory of $\C(R)$ of homologically bounded below complexes.
Given a complex $C$ and an integer $i$, $\Sigma^{i}C$ denotes the complex
such that $(\Sigma^{i}C)_{n}=C_{n-i}$ and whose boundary operators are
$(-1)^{i}\delta_{n-i}^{C}$; The $n$th homology module of $C$ is the module $\h_{n}(C)=\Z_{n}(C)/\B_{n}(C)$, where $\Z_{n}(C)=\Ker (\delta_{n}^{C}), ~~ \B_{n}(C)=\im (\delta_{n+1}^{C})$; we set $\h^{n}(C)=\h_{-n}(C), \ \C_{n}(C)=\Coker (\delta_{n+1}^{C}).$ Given a left
$R$-module $M$, we will denote by $\s^{n}(M)$ the
complex with $M$ in the $n$th place and $0$ in the other places. For more details of complexes used in this paper the reader can consult \cite{Hartshorne1966, MacLane1998}.

\section{Ding projective dimension of complexes}
In the section, $\mathcal{F}$ stands for the class of flat modules.

\begin{definition} A complex of $R$-modules $T$ is said to be totally $\mathcal{F}$-acyclic if the following conditions are satisfied:

(1) $T_{n}$ is projective for every $n\in \mathbb{Z}$.

(2) $T$ is exact.

(3) $\Hom_{R}(T, F)$ is exact for every $R$-module $F\in \mathcal{F}$.
\end{definition}

An exact complex of projective $R$-modules $T$ is said to be totally acyclic \cite{Veliche2006} if $\Hom_{R}(T, P)$ is exact for every projective $R$-module $P$. By definitions, totally $\mathcal{F}$-acyclic complex is totally acyclic.

For totally $\mathcal{F}$-acyclic complex, we have the following two properties using the routine proof.

\begin{lemma}\label{lem2.2} Let $T$ be a totally $\mathcal{F}$-acyclic complex.
If $Q$ is a complex of flat modules and $n$ is an integer,
then any morphism of complexes $\underline{\varphi}: T_{n}\sqsupset~\longrightarrow
Q_{n}\sqsupset$ can be extended to morphism $\varphi: T\longrightarrow
Q$ such that $\varphi_{n}\sqsupset~=\underline{\varphi}$.
Every morphism $\varphi$ with this property is defined as unique up to homotopy.
\end{lemma}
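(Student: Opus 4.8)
```latex
The plan is to prove this by a standard dimension-shifting argument, building the
extension $\varphi$ degree by degree in both directions away from level $n$, and
then proving uniqueness up to homotopy by the same technique applied to the
difference of two extensions. Throughout, the key structural input is that $T$
is totally $\mathcal{F}$-acyclic, so that $\Hom_R(T,F)$ is exact for every flat
module $F$; since each $Q_m$ is flat by hypothesis, applying
$\Hom_R(-,Q_m)$ to the exact complex $T$ yields exactness at every spot. I read
the notation $T_{n}\sqsupset$ as the hard (brutal) truncation
$\cdots\to T_{n+1}\to T_{n}\to 0$ of $T$ at level $n$, so the given data
$\underline{\varphi}$ is a chain map agreeing with a prescribed map in degrees
$\ge n$, and the task is to fill in the lower degrees.

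First I would extend $\varphi$ downward. Suppose inductively that
$\varphi_{m}\colon T_{m}\to Q_{m}$ has been constructed for all $m\ge k$ so that
the square at level $k$ commutes, i.e.\ $\delta^{Q}_{k}\varphi_{k}
=\varphi_{k-1}\delta^{T}_{k}$ holds once $\varphi_{k-1}$ is found. To produce
$\varphi_{k-1}$, I would consider the composite
$\varphi_{k}\delta^{T}_{k}\colon T_{k}\to Q_{k-1}$ and check it vanishes on
$\B_{k-1}(T)=\im(\delta^{T}_{k})$-boundaries appropriately; the point is that the
obstruction to lifting along $\delta^{Q}_{k}\colon Q_{k-1}\to Q_{k-2}$ lives in an
$\Ext$ or $\Hom$-exactness group that is killed precisely because $Q_{k-1}$ is
flat and $T$ is exact with $\Hom_R(T,Q_{k-1})$ exact. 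Concretely, since
$T_k$ is projective and the relevant row of $T$ is exact, the map on cycles
$\Z_{k-1}(T)\to Q_{k-1}$ induced by $\varphi_k\delta^T_k$ factors through a
lift, and flatness of $Q_{k-1}$ combined with exactness of
$\Hom_R(T,Q_{k-1})$ guarantees the required diagonal filler exists. This is the
main obstacle: one must organise the diagram chase so that the totally
$\mathcal{F}$-acyclic hypothesis is invoked at exactly the step where a naive
projective lifting would fail.

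For uniqueness up to homotopy, I would let $\varphi,\psi\colon T\to Q$ be two
extensions agreeing in degrees $\ge n$ and set $\chi=\varphi-\psi$, a chain map
that is zero in degrees $\ge n$. The goal is to build a homotopy
$s_{m}\colon T_{m}\to Q_{m+1}$ with $\chi_{m}=\delta^{Q}_{m+1}s_{m}
+s_{m-1}\delta^{T}_{m}$, taking $s_{m}=0$ for $m\ge n$. Proceeding downward by
induction on $m<n$, the map $\chi_{m}-s_{m-1}\delta^{T}_{m}$ must be shown to
factor through $\delta^{Q}_{m+1}$, which again reduces to the exactness of
$\Hom_R(T,Q_{m+1})$ together with projectivity of $T_m$; this is the same
obstruction-vanishing mechanism as in the construction step, so no new idea is
needed.

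In summary, both halves of the statement are instances of the comparison theorem
for the class $\mathcal{F}$, and the work lies entirely in the inductive filler
arguments, whose viability hinges on condition (3) of the definition of totally
$\mathcal{F}$-acyclic applied with $F=Q_{m}$ flat at each degree. I expect the
routine verifications to be straightforward once the diagrams are set up; the one
delicate bookkeeping point is ensuring the truncation conventions match so that
the boundary case at level $n$ glues correctly with the inductive lower-degree
construction.
```
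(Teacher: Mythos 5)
Your overall strategy---extend $\underline{\varphi}$ downward degree by degree, then run the same induction on $\chi=\varphi-\psi$ to build the homotopy---is the standard comparison argument, and it is evidently what the authors intend, since the paper gives no proof and declares the lemma routine. The problem is that the key filler step is misidentified in your write-up, and as literally stated it would fail. Twice you locate the obstruction in $Q$: you speak of ``lifting along $\delta^{Q}_{k}\colon Q_{k-1}\to Q_{k-2}$'' in the existence part, and of showing $\chi_{m}-s_{m-1}\delta^{T}_{m}$ ``factors through $\delta^{Q}_{m+1}$'' in the uniqueness part. But $Q$ is an arbitrary complex of flat modules with no exactness hypothesis at all, so nothing can be lifted through its differentials, and projectivity of $T_{m}$ gives no purchase there. (Also, the composite $\varphi_{k}\delta^{T}_{k}\colon T_{k}\to Q_{k-1}$ does not typecheck; the map you want is $\delta^{Q}_{k}\varphi_{k}$.)

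The correct mechanism uses only the structure of $T$. The map $\delta^{Q}_{k}\varphi_{k}\colon T_{k}\to Q_{k-1}$ kills $\B_{k}(T)=\im(\delta^{T}_{k+1})$, hence factors through $\C_{k}(T)\cong \B_{k-1}(T)=\Z_{k-1}(T)\subseteq T_{k-1}$ by exactness of $T$; the task is then to \emph{extend} the resulting map $\Z_{k-1}(T)\to Q_{k-1}$ along the inclusion $\Z_{k-1}(T)\hookrightarrow T_{k-1}$. The obstruction to that extension lives in $\Ext^{1}_{R}(T_{k-1}/\Z_{k-1}(T),Q_{k-1})=\Ext^{1}_{R}(\C_{k-1}(T),Q_{k-1})$, which vanishes because $Q_{k-1}$ is flat and $\C_{k-1}(T)$ is a cokernel of a totally $\mathcal{F}$-acyclic complex---equivalently, because $\h(\Hom_{R}(T,Q_{k-1}))=0$, which identifies with this $\Ext^{1}$ by dimension shifting along the exact complex of projectives $T$. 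This is where condition (3) of the definition actually enters, and it is an extension-off-cycles problem, not a lifting problem. The homotopy step is the identical extension problem applied to $\chi_{k}-\delta^{Q}_{k+1}s_{k}$ (starting from $s_{m}=0$ for $m\geq n-1$). Once you replace ``lift through $\delta^{Q}$'' by ``extend off $\Z_{k-1}(T)$ using $\Ext^{1}_{R}(\C_{k-1}(T),F)=0$ for flat $F$,'' your proof closes.
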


\begin{lemma}\label{lem2.3} Let $T$ be a totally $\mathcal{F}$-acyclic complex. If $Q$ is a bounded above complex of flat modules, then
\[\h(\Hom_{R}(T, Q))=0.\]
\end{lemma}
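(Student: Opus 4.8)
The plan is to translate the vanishing of $\h(\Hom_{R}(T,Q))$ into a purely homotopy-theoretic statement and then feed it into Lemma~\ref{lem2.2}. The first step is the standard identification of the homology of a Hom complex with homotopy classes of chain maps: with the convention
\[
\Hom_{R}(T,Q)_{m}=\prod_{j}\Hom_{R}(T_{j},Q_{j+m}),
\]
the degree-$n$ cycles of $\Hom_{R}(T,Q)$ are exactly the chain maps $T\to\Sigma^{-n}Q$, and the degree-$n$ boundaries are exactly the null-homotopic such maps, so that $\h_{n}(\Hom_{R}(T,Q))$ is the group of homotopy classes of chain maps $T\to\Sigma^{-n}Q$. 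Since each shift $\Sigma^{-n}Q$ is again a bounded above complex of flat modules, it therefore suffices to prove the single claim that \emph{every} chain map from $T$ into a bounded above complex of flat modules is null-homotopic.

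To prove that claim, fix a chain map $\varphi\colon T\to Q$ and choose an integer $s$ with $Q_{k}=0$ for all $k>s$. Pick any integer $n>s$. Then the truncation $Q_{n}\sqsupset$ (the subcomplex of $Q$ concentrated in degrees $\ge n$) is the zero complex, so the induced morphism $\varphi_{n}\sqsupset\colon T_{n}\sqsupset~\longrightarrow Q_{n}\sqsupset$ is the zero morphism. Consequently both $\varphi$ and the zero morphism $0\colon T\to Q$ are extensions of the zero morphism $T_{n}\sqsupset~\longrightarrow Q_{n}\sqsupset=0$ in the precise sense of Lemma~\ref{lem2.2}. By the uniqueness-up-to-homotopy clause of that lemma, $\varphi$ and $0$ are homotopic; that is, $\varphi$ is null-homotopic. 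As $\varphi$ was arbitrary, every chain map $T\to Q$ is null-homotopic, and by the reduction of the previous paragraph this gives $\h(\Hom_{R}(T,Q))=0$.

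The argument itself is immediate once set up, so the only genuinely delicate point is the bookkeeping in the reduction: one must verify the identification of $\h_{n}(\Hom_{R}(T,Q))$ with homotopy classes of chain maps $T\to\Sigma^{-n}Q$ (tracking the sign conventions in the differential of the Hom complex) and confirm that each $\Sigma^{-n}Q$ is still bounded above with flat components, so that Lemma~\ref{lem2.2} applies uniformly in $n$; one should also keep the truncation direction straight, boundedness above of the target being precisely what forces the high-degree truncation $Q_{n}\sqsupset$ to vanish. I expect this reduction, rather than any computation, to be the main obstacle. I note that one could alternatively try a dévissage, using the brutal-truncation short exact sequence of complexes and the termwise exactness of $\Hom_{R}(T,-)$ (each $T_{j}$ being projective) to peel off the bounded part, where condition (3) of total $\mathcal{F}$-acyclicity handles single flat modules; however, this only reduces matters to a bounded above complex and still hinges on a comparison statement equivalent to Lemma~\ref{lem2.2}, so it is not logically independent, and the homotopy argument above is the shorter and evidently intended route.
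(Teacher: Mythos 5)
The paper offers no proof of this lemma (it dismisses both Lemma \ref{lem2.2} and Lemma \ref{lem2.3} as following "by routine proof"), so there is nothing to compare line by line; judged on its own, your argument is correct. The identification of $\h_{n}(\Hom_{R}(T,Q))$ with homotopy classes of chain maps $T\to\Sigma^{-n}Q$ is the standard one under the paper's conventions, each $\Sigma^{-n}Q$ is again a bounded above complex of flat modules, and the observation that a bounded above target forces $Q_{n\sqsupset}=0$ for $n$ large, so that the uniqueness-up-to-homotopy clause of Lemma \ref{lem2.2} (applied with $\underline{\varphi}=0$ to both $\varphi$ and the zero map) yields $\varphi\simeq 0$, is a clean and valid reduction; this is essentially the intended "routine" argument, merely packaged through Lemma \ref{lem2.2} rather than repeating the degreewise construction of the homotopy using $\Ext^{1}_{R}(\C_{i}(T),F)=0$ for flat $F$.
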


An $R$-module $M$ is called strongly Gorenstein flat \cite{Ding2009} if there exists a totally $\mathcal{F}$-acyclic complex $T$ such that $\C_{0}(T)=M$. Since strongly Gorenstein flat modules have properties analogous to Gorenstein projective modules, Gillespie \cite{Gillespie2010} call these modules Ding projective modules. Note that every projective module is Ding projective, and every cokernel $
\C_{n}(T)$ of totally $\mathcal{F}$-acyclic complex $T$ is Ding projective.

Ding projective modules have the following properties.

\begin{lemma}\label{lem2.4} Let $\DP(R)$ stand for the class of Ding projective modules. The following assertions hold.

$\mathrm{(1)}$ If $M\in \DP(R)$, then $\Ext^{i}_{R}(M,L)=0$ for all $i>0$ and all module $L$ of finite flat or finite injective dimension.

$\mathrm{(2)}$ $\DP(R)$ is a projectively resolving class, and closed under direct sums and direct summands.

$\mathrm{(3)}$ If $0\rightarrow A \rightarrow B \rightarrow C\rightarrow 0$ is an exact sequence, and $A, ~~B\in \DP(R)$, $\Ext_{R}^{1}(C,F)=0$ for every flat module $F$, then $C\in \DP(R)$.
\end{lemma}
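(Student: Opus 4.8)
The plan is to deduce everything from the defining property of a Ding projective module $M$, namely the existence of a totally $\mathcal{F}$-acyclic complex $T$ with $M=\C_{0}(T)$, together with the observation already recorded in the text that every cokernel $\C_{j}(T)$ is again Ding projective. Throughout I would use the short exact sequences
\[0\longrightarrow \C_{j}(T)\longrightarrow T_{j-1}\longrightarrow \C_{j-1}(T)\longrightarrow 0,\]
coming from the exactness of $T$, whose middle terms are projective.

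For (1), the flat case is immediate: the left half of $T$ is a projective resolution of $M$, and condition (3) of the definition of totally $\mathcal{F}$-acyclic says exactly that $\Hom_{R}(T,F)$ is exact, so $\Ext_{R}^{i}(M,F)=0$ for all $i>0$ and every flat $F$. Finite flat dimension then follows by the usual dimension-shifting argument along a finite flat resolution of $L$, the ends $\Ext_{R}^{i}(M,\text{flat})$ being zero. For the injective case I would argue differently, exploiting that all the cosyzygies $\C_{-k}(T)$ are Ding projective. Dimension shifting in the sequences above gives $\Ext_{R}^{i}(M,L)\cong \Ext_{R}^{i+k}(\C_{-k}(T),L)$ for every $i\ge 1$ and every $k\ge 0$, since the middle terms are projective. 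If $\mathrm{id}_{R}L=n<\infty$ then $\Ext_{R}^{j}(-,L)=0$ for $j>n$, so choosing $k$ with $i+k>n$ forces $\Ext_{R}^{i}(M,L)=0$. Thus finite injective dimension is handled by pushing the degree arbitrarily high through the acyclic complex, rather than by inducting on $n$.

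For (2), that projectives lie in $\DP(R)$ and that $\DP(R)$ is closed under direct sums is routine; for the latter one uses $\Hom_{R}(\bigoplus_{\alpha}T^{\alpha},F)\cong\prod_{\alpha}\Hom_{R}(T^{\alpha},F)$ and the exactness of products of exact complexes of modules. The substance is that $\DP(R)$ is projectively resolving. For an extension $0\to A\to B\to C\to 0$ with $A,C\in\DP(R)$, part (1) gives $\Ext_{R}^{\ge1}(B,F)=0$ for flat $F$, and the crucial step is to build a $\Hom_{R}(-,F)$-exact projective coresolution of $B$: the horseshoe lemma applied to right coresolutions of $A$ and $C$ produces one for $B$ through degreewise split exact sequences, the required liftings being available because $\Ext_{R}^{1}(C,P)=0$ for the projectives $P$ of $A$'s coresolution (projectives have flat dimension zero, so this is (1)). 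Splicing with a projective resolution yields a totally $\mathcal{F}$-acyclic complex with $B$ as a cokernel. Closure under direct summands then follows formally from projective resolvingness and closure under direct sums, via the Eilenberg swindle.

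For (3), and symmetrically for closure under kernels of epimorphisms, the long exact sequence of $\Ext_{R}^{*}(-,F)$ attached to $0\to A\to B\to C\to 0$ with $A,B\in\DP(R)$ gives $\Ext_{R}^{\ge2}(C,F)=0$, and the hypothesis $\Ext_{R}^{1}(C,F)=0$ upgrades this to $\Ext_{R}^{\ge1}(C,F)=0$, so the left half of a complete resolution of $C$ is unobstructed. To manufacture the right half I would first reduce, by a pushout along an embedding $A\hookrightarrow P$ of $A$ into a projective with Ding projective cokernel, to the case where the submodule is projective (the new middle term being Ding projective by the extension closure just established), and then assemble the complete resolution of $C$ as the mapping cone $\Cone(\varphi)$ of a comparison morphism $\varphi$, furnished by Lemma~\ref{lem2.2}, lifting the inclusion of the submodule into the middle term; a mapping cone of totally $\mathcal{F}$-acyclic complexes is again exact, projective in each degree, and $\Hom_{R}(-,F)$-exact, hence totally $\mathcal{F}$-acyclic, and one identifies $C$ with one of its cokernels. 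I expect the main obstacle to be precisely this construction of the $\Hom_{R}(-,F)$-exact projective coresolution of the middle term (for extensions) and of the quotient or kernel (for (3) and kernels of epimorphisms): unlike the left half, which exists for purely formal reasons once the relevant $\Ext$ groups vanish, the right half must be built from the coresolutions of the given modules, and keeping the spliced complex simultaneously exact, degreewise projective, and $\Hom_{R}(-,F)$-exact is where the hypothesis $\Ext_{R}^{1}(C,F)=0$ and the Ding projectivity of the other two terms are genuinely used; the delicate bookkeeping is to check that the cone recovers $C$ on the nose rather than merely a complex quasi-isomorphic to it.
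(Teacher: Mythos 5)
Your parts (1) and (2) are sound and essentially reconstruct the arguments of Holm that the paper merely cites (Theorem 2.5 of \cite{Holm2004} for (2)); the cosyzygy trick $\Ext^{i}_{R}(M,L)\cong\Ext^{i+k}_{R}(\C_{-k}(T),L)$ for the finite-injective-dimension case is a legitimate alternative to Holm's induction on the injective dimension of $L$, and the dual horseshoe argument, with the liftings supplied by $\Ext_{R}^{1}(C,P)=0$ for projective $P$, is exactly the right mechanism for extension closure.

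Part (3), however, has a genuine gap precisely at the point you flag. After the pushout you have $0\to P\to X\to C\to 0$ with $P$ projective and $X$ Ding projective, and you propose to finish by taking the mapping cone of a chain map $\varphi\colon T'\to T$ between totally $\mathcal{F}$-acyclic complexes lifting $P\hookrightarrow X$. But the cokernels of $\Cone(\varphi)$ are not $C$: the degreewise split exact sequence $0\to T\to\Cone(\varphi)\to\Sigma T'\to 0$ together with the snake lemma (the connecting map $\Z_{n}(T')\to\C_{n}(T)$ vanishes because it factors through the image of $\B_{n}(T')$ in $\C_{n}(T')$, which is zero) yields short exact sequences $0\to\C_{n}(T)\to\C_{n}(\Cone(\varphi))\to\C_{n-1}(T')\to 0$; in particular $\C_{0}(\Cone(\varphi))$ is an extension of $\C_{-1}(T')$ by $X$, not the quotient $X/P=C$. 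So the ``delicate bookkeeping'' does not work out, and no degree of the cone recovers $C$. The repair is immediate and you have already assembled every ingredient it needs: since $P$ is projective, hence flat, the hypothesis gives $\Ext_{R}^{1}(C,P)=0$, so the sequence $0\to P\to X\to C\to 0$ splits and $C$ is a direct summand of the Ding projective module $X$; closure under direct summands from your part (2) then gives $C\in\DP(R)$. This splitting argument is in fact the proof of Corollary 2.11 in \cite{Holm2004}, which is what the paper's one-line proof of (3) points to.
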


\begin{proof} (1) is obvious.

(2) It follows by analogy with the proof of Theorem 2.5 in \cite{Holm2004}.

(3) It follows by analogy with the proof of Corollary 2.11 in \cite{Holm2004}.

\end{proof}

\begin{lemma}\label{lem2.5} Let $T$ be an exact complex of projective modules. Then the following are equivalent:

$\mathrm{(1)}$ $T$ is totally $\mathcal{F}$-acyclic.

$\mathrm{(2)}$ $\C_{i}(T)$ is Ding projective for all $i\in \mathbb{Z}$.

$\mathrm{(3)}$ $\C_{i}(T)$ is Ding projective for infinitely many $i\leq 0$.
\end{lemma}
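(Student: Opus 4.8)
The plan is to prove the chain of implications $(1)\Rightarrow(2)\Rightarrow(3)\Rightarrow(1)$, which is the standard strategy for such equivalences and lets me reuse the easier directions. First, for $(1)\Rightarrow(2)$: if $T$ is totally $\mathcal{F}$-acyclic, then each cokernel $\C_i(T)$ is Ding projective essentially by the remark preceding the lemma, which already records that every cokernel of a totally $\mathcal{F}$-acyclic complex is Ding projective. The point is that the truncations of $T$ at any spot still witness $\C_i(T)$ as the appropriate cokernel, and the conditions (projectivity of terms, exactness, and exactness of $\Hom_R(T,F)$ for flat $F$) are inherited by these truncated complexes. So this direction is immediate. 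The implication $(2)\Rightarrow(3)$ is trivial, since ``all $i$'' in particular gives infinitely many $i\le 0$.

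The substantive direction is $(3)\Rightarrow(1)$, and this is where I expect the main obstacle to lie. I start from an exact complex $T$ of projectives for which $\C_i(T)$ is Ding projective for infinitely many $i\le 0$, and I must verify the three defining conditions of total $\mathcal{F}$-acyclicity. Conditions (1) and (2) are given as hypotheses (terms projective, complex exact), so the entire content is condition (3): that $\Hom_R(T,F)$ is exact for every flat module $F$. To see this, I would fix a flat module $F$ and analyze the complex $\Hom_R(T,F)$ degreewise. Exactness of $\Hom_R(T,F)$ at degree $n$ is controlled by the short exact sequences $0\to \Z_n(T)\to T_n\to \B_{n-1}(T)\to 0$ coming from the exactness of $T$, where I can identify $\B_{n-1}(T)$ with the kernel that appears in the cokernel presentation. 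Applying $\Hom_R(-,F)$ and chasing the resulting long exact sequences, exactness of $\Hom_R(T,F)$ at each spot reduces to the vanishing of $\Ext^1_R(\C_i(T),F)$ (and the higher $\Ext$ groups obtained by dimension shifting along $T$).

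The key leverage is \leref{lem2.4}: since each of the infinitely many cokernels $\C_i(T)$ with $i\le 0$ is Ding projective, part (1) of that lemma gives $\Ext^j_R(\C_i(T),F)=0$ for all $j>0$ and every flat module $F$ (flat modules have finite flat dimension). The role of ``infinitely many $i\le 0$'' is what makes the dimension-shifting argument propagate in \emph{both} directions along the whole complex: given any fixed degree $n$ at which I want exactness, I can shift down to one of the available Ding projective cokernels $\C_i(T)$ far enough below, rewrite the relevant $\Ext$ group at degree $n$ as an $\Ext^j_R(\C_i(T),F)$ for suitable $j>0$ using the exactness of $T$, and invoke the vanishing. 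Because infinitely many such good spots occur arbitrarily far down, every degree of $\Hom_R(T,F)$ is reached by this shifting, so $\Hom_R(T,F)$ is exact everywhere and $T$ is totally $\mathcal{F}$-acyclic. The delicate bookkeeping is ensuring the dimension shift connects an arbitrary degree $n$ to one of the infinitely many distinguished indices; this is where I would be most careful, but the existence of infinitely many Ding projective cokernels below $0$ guarantees the needed reach.
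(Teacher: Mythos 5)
Your proposal is correct and follows essentially the same route as the paper: the nontrivial direction $(3)\Rightarrow(1)$ is handled by fixing a flat module $F$ and a degree $n$, dimension-shifting along the exact complex of projectives to identify the cohomology of $\Hom_R(T,F)$ at degree $n$ with $\Ext_R^{m}(\C_{n-m}(T),F)$ for a suitably low Ding projective cokernel, and invoking the $\Ext$-vanishing of Lemma \ref{lem2.4}(1). The only cosmetic difference is that the paper compresses this into the single identity $\h^{n}(\Hom_{R}(T,Q))=\Ext_{R}^{m}(\C_{n-m}(T),Q)=0$, whereas you spell out the long-exact-sequence bookkeeping; note also that the shift only ever needs to go downward from $n$, so your worry about propagating ``in both directions'' is unnecessary.
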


\begin{proof} The implications $(1)\Rightarrow (2)\Rightarrow (3)$ are clear, so we only argue $(3)\Rightarrow (1)$.

Let $Q$ be a flat $R$-module and fix $n\in \mathbb{Z}$. We need to show $\h^{n}(\Hom_{R}(T, Q))=0$. By $(3)$, we choose an integer $m\geq1$ such that $n-m$ is small enough for $\C_{n-m}(T)$ to be Ding projective.
Thus
\[\h^{n}(\Hom_{R}(T, Q))=\Ext_{R}^{m}(\C_{n-m}(T), Q)=0.\]
\end{proof}

\begin{lemma} \label{lem2.6} If $G$ is Ding projective and $\cdots\longrightarrow P_{2}\longrightarrow P_{1}\longrightarrow P_{0}\stackrel{\alpha}\longrightarrow G\longrightarrow 0$ is a projective resolution of $G$, then there exists a totally $\mathcal{F}$-acyclic complex $T$ such that $T_{0}\sqsupset~=P$, where $P=\cdots\longrightarrow P_{2}\longrightarrow P_{1}\longrightarrow P_{0}\longrightarrow 0\longrightarrow \cdots.$
\end{lemma}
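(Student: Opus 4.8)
The plan is to build the totally $\mathcal{F}$-acyclic complex $T$ by splicing together the given projective resolution $P$ of $G$ with an appropriate projective coresolution on the left. Since $G$ is Ding projective, by definition there exists a totally $\mathcal{F}$-acyclic complex $S$ with $\C_{0}(S)=G$. The idea is to use the right half of $S$ (the part in nonnegative degrees, which witnesses $G$ as a cokernel via projectives $\cdots\to S_{1}\to S_{0}\to S_{-1}\to\cdots$) to supply the terms of $T$ in degrees below $0$, while using the given resolution $P$ to supply the terms in degrees $\geq 0$. Concretely, I would define $T$ so that $T_{i}=P_{i}$ for $i\geq 0$ (so that $T_{0}\sqsupset\,=P$ as required) and $T_{i}=S_{i}$ for $i<0$, with the differentials inherited from $P$ and $S$ respectively, and with the connecting map at the splice point built from the composite $P_{0}\stackrel{\alpha}{\longrightarrow}G\hookrightarrow S_{-1}$, where $G=\Z_{-1}(S)$ sits inside $S_{-1}$.

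First I would verify that $T$ is a complex of projectives and that it is exact. Exactness in positive degrees follows because $P\to G$ is a resolution; exactness in negative degrees follows from exactness of $S$; and exactness at the splice is precisely where the two pieces are glued through the common module $G$. The differential $T_{0}\to T_{-1}$ factors as $P_{0}\twoheadrightarrow G \hookrightarrow S_{-1}$, so its image equals the image of $G$ in $S_{-1}$, which is $\Z_{-1}(S)=\Ker(S_{-1}\to S_{-2})$, giving exactness at $T_{-1}$; and its kernel is $\Ker\alpha=\im(P_{1}\to P_{0})$, giving exactness at $T_{0}$. Thus $T$ is an exact complex of projective modules.

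The remaining task is to check condition $(3)$ of total $\mathcal{F}$-acyclicity, namely that $\Hom_{R}(T,F)$ is exact for all flat $F$; but here I would invoke \leref{lem2.5}. For the negative part of $T$, the cokernels $\C_{i}(T)=\C_{i}(S)$ agree with those of $S$ for $i$ sufficiently negative, and these are Ding projective since $S$ is totally $\mathcal{F}$-acyclic (indeed every cokernel of a totally $\mathcal{F}$-acyclic complex is Ding projective). Hence $\C_{i}(T)$ is Ding projective for infinitely many $i\leq 0$, and so by the implication $(3)\Rightarrow(1)$ of \leref{lem2.5}, $T$ is totally $\mathcal{F}$-acyclic.

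The main obstacle is making the splice at degree $0$ genuinely well defined and exact: one must identify $G$ simultaneously as the cokernel $\Coker(P_{1}\to P_{0})$ coming from the resolution and as the cycle module $\Z_{-1}(S)\subseteq S_{-1}$ coming from the witnessing complex, and confirm that the induced differential $T_{0}\to T_{-1}$ has the correct kernel and image so that no spurious homology is introduced at the junction. Once the gluing is arranged so that $\im(T_{0}\to T_{-1})=\Z_{-1}(S)$ and $\Ker(T_{0}\to T_{-1})=\im(T_{1}\to T_{0})$, exactness is immediate and the appeal to \leref{lem2.5} finishes the argument.
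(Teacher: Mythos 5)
Your construction is exactly the paper's: splice the given resolution $P$ in degrees $\geq 0$ with the negative part of a totally $\mathcal{F}$-acyclic complex witnessing $G$, define the degree-$0$ differential as the composite $P_{0}\twoheadrightarrow G\hookrightarrow S_{-1}$, and invoke Lemma \ref{lem2.5}(3)$\Rightarrow$(1) to conclude. The argument is correct and matches the paper's proof in every essential respect.
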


\begin{proof} By the definition of Ding projective modules, there is  a totally $\mathcal{F}$-acyclic complex $T'$ such that $\C_{0}(T')=G.$ Set
\[T_{i}=\bigg\{
\begin{aligned}
&T_{i}'\ for\ i<0,\\
&P_{i}\ for \ i\geq0,\\
\end{aligned} \]
and
\[\delta_{i}^{T}=\left\{
\begin{aligned}
\delta_{i}^{T'} &\  for \ i<0,\\
\beta\alpha &\ for \ i=0,\\
\delta_{i}^{P}&\ for \ i>0.
\end{aligned}\right. \]
where $\alpha: P_{0}\longrightarrow G$ and $\beta: G\longrightarrow T'_{-1}$ are the canonical maps. The complex $T$ is exact, $\C_{0}(T)=G$ and $\sqsubset_{-1}T=~\sqsubset_{-1}T'$, so $T$ is totally $\mathcal{F}$-acyclic by Lemma \ref{lem2.5}.
\end{proof}

According to \cite{Ding2009}, Ding projective dimension, or strongly Gorenstein flat dimension, of $M$ is defined by:
\[Dpd(M)=\inf \left\{n\in \mathbb{N}_{0}~\bigg{|}~
0\longrightarrow G_{n}\longrightarrow \cdots \longrightarrow G_{1}\longrightarrow G_{0}\longrightarrow M\longrightarrow 0 ~~is~~ exact,~~and~~G_{i}\in \DP(R)\right\}.\]

\begin{lemma}\label{lem2.7} $\mathrm{(1)}$ Let $M$ be an $R$-module with finite Ding projective dimension, and let $n$ be an integer. Then the following conditions are equivalent:

$\mathrm{(i)}$ $Dpd(M)\leq n$.

$\mathrm{(ii)}$ $\Ext_{R}^{i}(M, L)=0$ for all $i>n$, and all $R$-modules $L$ of finite flat dimension.

$\mathrm{(iii)}$ $\Ext_{R}^{i}(M, F)=0$ for all $i>n$, and all flat $R$-modules $F$.

$\mathrm{(iV)}$ For every exact sequence $0\longrightarrow K_{n} \longrightarrow
G_{n-1}\longrightarrow\cdots \longrightarrow G_{1}\longrightarrow
G_{0}\longrightarrow M\longrightarrow 0$,
where $G_{i}$ is Ding projective,
then also $K_{n}$ is Ding projective.

$\mathrm{(2)}$ If $(M_{i})_{i\in I}$ is a family of $R$-modules, then
\[Dpd(\oplus_{I} M_{i})=\sup \{Dpd(M_{i})|i\in I\}.\]
\end{lemma}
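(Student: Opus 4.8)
The plan is to prove Lemma \ref{lem2.7} as a Ding-projective analogue of the classical characterization of finite Gorenstein projective dimension, leaning on the properties of $\DP(R)$ already established in Lemma \ref{lem2.4} and the characterization of totally $\mathcal{F}$-acyclic complexes in Lemma \ref{lem2.5}. For part (1) I would establish the equivalences in the cycle $(i)\Rightarrow(ii)\Rightarrow(iii)\Rightarrow(iv)\Rightarrow(i)$. The implication $(i)\Rightarrow(ii)$ is a standard dimension-shifting argument: given a Ding projective resolution $0\to G_n\to\cdots\to G_0\to M\to 0$ of length $n$, break it into short exact sequences and apply $\Ext_R^i(-,L)$ together with the vanishing $\Ext_R^{j}(G_k,L)=0$ for $j>0$ supplied by Lemma \ref{lem2.4}(1), valid since $L$ has finite flat dimension. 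The implication $(ii)\Rightarrow(iii)$ is trivial, as every flat module has flat dimension $0$.

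The key step is $(iii)\Rightarrow(iv)$. Here I would take an arbitrary exact sequence $0\to K_n\to G_{n-1}\to\cdots\to G_0\to M\to 0$ with each $G_i$ Ding projective, and show $K_n$ is Ding projective. First I would use dimension shifting along this sequence together with the hypothesis $(iii)$ to conclude that $\Ext_R^{1}(K_n,F)=0$ for every flat $F$; indeed $\Ext_R^{1}(K_n,F)\cong\Ext_R^{n+1}(M,F)=0$, again using Lemma \ref{lem2.4}(1) to kill the intermediate $\Ext$ of the Ding projective modules $G_i$. To upgrade $K_n$ from ``$\Ext^1$-vanishing against flats'' to genuinely Ding projective, I would produce a totally $\mathcal{F}$-acyclic complex having $K_n$ as a cokernel. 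The idea is to splice a projective resolution of $K_n$ on the right with a complex obtained from the Ding projective modules $G_i$ on the left: since each $G_i$ admits a totally $\mathcal{F}$-acyclic complex by definition, one assembles a single exact complex $T$ of projectives whose cokernels are eventually the $G_i$ (hence Ding projective) and whose zeroth cokernel is $K_n$. Having infinitely many negative-degree cokernels equal to Ding projective modules $G_i$, Lemma \ref{lem2.5}$(3)\Rightarrow(1)$ then forces $T$ to be totally $\mathcal{F}$-acyclic, whence $K_n=\C_0(T)$ is Ding projective.

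For $(iv)\Rightarrow(i)$: since $M$ has finite Ding projective dimension, pick any Ding projective resolution and truncate it at stage $n$, producing an exact sequence $0\to K_n\to G_{n-1}\to\cdots\to G_0\to M\to 0$ with the $G_i$ Ding projective; by $(iv)$ the syzygy $K_n$ is Ding projective, and this truncated sequence then witnesses $Dpd(M)\le n$. For part (2), I would show both inequalities. The inequality $Dpd(\oplus_I M_i)\le\sup\{Dpd(M_i)\}$ follows by taking the direct sum of Ding projective resolutions of the $M_i$, using that $\DP(R)$ is closed under arbitrary direct sums by Lemma \ref{lem2.4}(2). The reverse inequality $Dpd(M_j)\le Dpd(\oplus_I M_i)$ follows because each $M_j$ is a direct summand of the sum and $\DP(R)$ is closed under direct summands, again by Lemma \ref{lem2.4}(2), so a bounded Ding projective resolution of the sum restricts to one for each summand.

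The main obstacle I expect is the construction in $(iii)\Rightarrow(iv)$: carefully splicing the given resolution, a projective resolution of $K_n$, and the totally $\mathcal{F}$-acyclic complexes attached to the $G_i$ into one exact complex $T$ of projectives whose cokernels are Ding projective in infinitely many negative degrees, so that Lemma \ref{lem2.5} applies. The bookkeeping of the connecting maps and verifying exactness at the splice points, together with confirming that $\Ext_R^1(K_n,F)=0$ is exactly what is needed to glue the projective resolution to the $G_i$-part while preserving the totally $\mathcal{F}$-acyclic property, is where the real work lies; the remaining implications are routine dimension shifting.
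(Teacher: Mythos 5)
The paper gives no argument of its own here: it simply asserts that both parts follow by analogy with Theorem 2.20 and Proposition 2.19 of \cite{Holm2004}, so your proposal is in effect an attempt to reconstruct Holm's proofs in the Ding projective setting. Your implications $(i)\Rightarrow(ii)\Rightarrow(iii)$, your $(iv)\Rightarrow(i)$, and the outline of part (2) are essentially correct, though for the inequality $Dpd(M_{j})\leq Dpd(\oplus_{I}M_{i})$ you should argue via syzygies (sum projective resolutions of all the $M_{i}$, use $(1)(iv)$ to see that the $d$-th syzygy of the sum is Ding projective, then pass to the direct summand which is the $d$-th syzygy of $M_{j}$); a Ding projective resolution of the sum does not literally ``restrict'' to one of each summand.

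The genuine gap is in $(iii)\Rightarrow(iv)$. The complex $T$ you want must be an exact complex of \emph{projective} modules with $\C_{0}(T)=K_{n}$ and with $\C_{i}(T)$ Ding projective for infinitely many $i\leq0$; those negative-degree cokernels are cosyzygies of $K_{n}$, i.e.\ cokernels in a projective coresolution $0\to K_{n}\to T_{-1}\to T_{-2}\to\cdots$. The modules $G_{i}$ cannot serve as these cokernels: they need not be projective, there are only finitely many of them, and they sit on the wrong side of $K_{n}$ (one has $K_{n}\hookrightarrow G_{n-1}$, so the $G_{i}$ govern the syzygies of $M$ above $K_{n}$, not the cosyzygies of $K_{n}$). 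If you try to start the coresolution by composing $K_{n}\hookrightarrow G_{n-1}\hookrightarrow Q$ with $Q$ projective taken from a totally $\mathcal{F}$-acyclic complex of $G_{n-1}$, the resulting cokernel is an extension of $\Coker(G_{n-1}\to Q)$ by $G_{n-1}/K_{n}$, and the Ding projectivity of the latter is exactly the kind of statement being proved; the construction does not close up. Note also that your argument for this implication never uses the standing hypothesis that $Dpd(M)$ is finite, which is essential. The correct route is: the resolving property of $\DP(R)$ (Lemma \ref{lem2.4}(2)) gives $d:=Dpd(K_{n})\leq\max\{0,\,Dpd(M)-n\}<\infty$; dimension shifting gives $\Ext_{R}^{i}(K_{n},F)=0$ for all $i>0$ and all flat $F$; then induct on $d$, writing $0\to K'\to P\to K_{n}\to0$ with $P$ projective and $Dpd(K')=d-1$, so that $K'$ is Ding projective by induction and $K_{n}$ is Ding projective by Lemma \ref{lem2.4}(3) applied with $\Ext_{R}^{1}(K_{n},F)=0$. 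Lemma \ref{lem2.4}(3) is precisely the packaged form of the splice you are trying to carry out by hand; invoke it rather than rebuilding $T$ directly.
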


\begin{proof} They follow by analogy with the proofs of Theorem 2.20 and Proposition 2.19 respectively in \cite{Holm2004}.
\end{proof}

\begin{definition} An $\mathcal{F}$-complete resolution of $X$ is a diagram of morphisms of complexes $T\stackrel{\tau}\longrightarrow P\stackrel{\pi}\longrightarrow X$, where $\pi: P\longrightarrow X$ is a DG-projective resolution, $T$ is a totally $\mathcal{F}$-acyclic complex and $\tau_{i}$ is bijective for all $i\gg0$. An $\mathcal{F}$-complete resolution $T\stackrel{\tau}\longrightarrow P\stackrel{\pi}\longrightarrow X$ of $X$ is said to be surjective if $\tau_{i}$ is surjective for all $i\in \mathbb{Z}$.
\end{definition}

\begin{lemma} \label{lem2.9} Let $T\stackrel{\tau}\longrightarrow P\stackrel{\pi}\longrightarrow X$ be an $\mathcal{F}$-complete resolution. If $g$ is an integer such that $\tau_{i}$ is bijective for all $i\geq g$, then there exists an $\mathcal{F}$-complete resolution $T'\stackrel{\tau'}\longrightarrow P\stackrel{\pi}\longrightarrow X$ such that $\tau'$ is a surjective morphism, $\tau_{i}'$ is bijective for all $i\geq g$, and a homology equivalence $\alpha: T\longrightarrow T'$ such that $\tau=\tau'\alpha$ and $\alpha_{i}=\id^{T_{i}}$ for all $i\geq g$.
\end{lemma}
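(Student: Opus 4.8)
The plan is to leave $P$, $\pi$, and the part of $T$ in degrees $\ge g$ completely untouched, and to enlarge $T$ only in degrees $<g$ by adjoining a contractible complex of projectives that already surjects onto $P$ there; adding such a complex forces $\tau$ to become surjective without disturbing total $\mathcal{F}$-acyclicity. The guiding observation is that both $T$ and the complex $T'$ I will build are exact, so \emph{any} chain map $\alpha\colon T\to T'$ is automatically a homology equivalence. Hence the real content of the lemma is producing the surjectivity together with the correct bookkeeping in degrees $\ge g$, not the homology equivalence itself.

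Concretely, for each integer $n<g$ I would take the disk complex $\mathbb{D}^{n}$ with $P_{n}$ in homological degrees $n$ and $n-1$ and identity differential between them, together with the canonical chain map $\rho^{(n)}\colon \mathbb{D}^{n}\to P$ that is $\mathrm{id}_{P_{n}}$ in degree $n$ and $\delta^{P}_{n}$ in degree $n-1$. Setting $\mathbb{D}=\bigoplus_{n<g}\mathbb{D}^{n}$ and $\rho=\bigoplus_{n<g}\rho^{(n)}\colon \mathbb{D}\to P$, one obtains a contractible complex of projectives with $\mathbb{D}_{i}=0$ for $i\ge g$, while in each degree $i<g$ the summand coming from the top of $\mathbb{D}^{i}$ makes $\rho_{i}$ surjective onto $P_{i}$. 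I would then set $T'=T\oplus\mathbb{D}$, define $\tau'\colon T\oplus\mathbb{D}\to P$ by $(t,d)\mapsto \tau(t)+\rho(d)$, and take $\alpha\colon T\to T'$ to be the canonical inclusion $t\mapsto(t,0)$.

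The verification then splits into routine checks. First, $T'$ is totally $\mathcal{F}$-acyclic because it is a direct sum of the totally $\mathcal{F}$-acyclic complex $T$ and the contractible complex $\mathbb{D}$, and each of the three defining conditions is stable under finite direct sums, the relevant one for $\mathbb{D}$ being that $\Hom_{R}(\mathbb{D},F)$ stays contractible and hence exact. Second, $\tau'_{i}$ is bijective for $i\ge g$, since there $\mathbb{D}_{i}=0$ and $\tau'_{i}=\tau_{i}$; in particular $\tau'_{i}$ is still bijective for all $i\gg0$, so $T'\to P\to X$ is again an $\mathcal{F}$-complete resolution. Third, $\tau'$ is surjective everywhere, being bijective in degrees $\ge g$ and surjective via $\rho_{i}$ in degrees $<g$. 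Finally $\alpha_{i}=\mathrm{id}^{T_{i}}$ for $i\ge g$, $\tau'\alpha=\tau$ by construction, and $\alpha$ is a homology equivalence because $T$ and $T'$ are both exact.

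The step needing the most care is the construction of $\rho$ as a genuine chain map that vanishes in degrees $\ge g$ yet stays surjective in degrees $<g$: I must check the identity $\delta^{P}\rho=\rho\,\partial^{\mathbb{D}}$ degree by degree, where the cross terms cancel precisely because $\delta^{P}\delta^{P}=0$, and I must confirm that restricting to $n<g$ both kills $\mathbb{D}$ above degree $g-1$ and still leaves the identity summand $P_{g-1}\to P_{g-1}$ guaranteeing surjectivity at the top of the modified range. Once this is in place, exactness of $T'$, the homology-equivalence property of $\alpha$, and the preservation of the DG-projective resolution $\pi$ are all immediate.
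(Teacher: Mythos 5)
Your construction is correct and is essentially the paper's own (one-line) proof: the paper sets $(T')_{n}=(T\oplus\,\sqsubset_{g-1}P\oplus\Sigma^{-1}\sqsubset_{g-1}P)_{n}$, and the contractible complex $\sqsubset_{g-1}P\oplus\Sigma^{-1}\sqsubset_{g-1}P$ with its cone differential is exactly your direct sum of disks $\mathbb{D}=\bigoplus_{n<g}\mathbb{D}^{n}$, with the same comparison map to $P$. All of your verifications (total $\mathcal{F}$-acyclicity of the sum, bijectivity of $\tau'_{i}$ for $i\geq g$, surjectivity below $g$, and the homology equivalence $\alpha$) are precisely the checks the paper leaves to the reader.
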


\begin{proof} Set $(T')_{n}=(T\oplus ~\sqsubset_{g-1}P\oplus\Sigma^{-1}~\sqsubset_{g-1}P)_{n}$ as desired.
\end{proof}

\begin{definition} The Ding projective dimension of $X$ is defined by
\[\Dpd_{R}(X)=\inf \left\{n\in \mathbb{Z}~\rule[-6mm]{0.2mm}{15mm}~
\begin{aligned}
&T\stackrel{\tau}\longrightarrow P\stackrel{\pi}\longrightarrow X ~\text{is an $\mathcal{F}$-complete resolution}  \\
  & \text{with}~\tau_{i}: T_{i}\rightarrow P_{i}\ \text{bijective for each}\ i\geq n \ \\
 \end{aligned}\right\}.\]
 \end{definition}

 \begin{remark} (1) For any complex $X$, $\Dpd_{R}(X)\in \{-\infty\}\cup \mathbb{Z}\cup\{\infty\}.$

 (2) $\Dpd_{R}(X)=-\infty$ if and only if $X$ is exact.

 (3) For any $k\in \mathbb{Z}$, $\Dpd_{R}(\Sigma^{k}X)=\Dpd_{R}(X)+k.$
\end{remark}

\begin{theorem}\label{the2.16} Let $X$ be a complex, $n$ an integer. Then the following assertions are equivalent:

$\mathrm{(1)}$ $\Dpd_{R}(X)\leq n$.

$\mathrm{(2)}$ $\sup X\leq n$ and there exists a $\DG$-projective resolution $P\longrightarrow X$ such that the module $\C_{n}(P)$ is Ding projective.

$\mathrm{(3)}$ $\sup X\leq n$ and for every $\DG$-projective resolution $P'\longrightarrow X$, the module $\C_{n}(P')$ is Ding projective.

$\mathrm{(4)}$ For every $\DG$-projective resolution $P'\longrightarrow X$, there exists a surjective $\mathcal{F}$-complete resolution $T'\longrightarrow P'\longrightarrow X$ such that $\tau_{i}'=\id^{T_{i}'}$ for all $i\geq n$.
\end{theorem}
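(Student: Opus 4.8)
The plan is to prove the cycle of implications $(1)\Rightarrow(2)\Rightarrow(3)\Rightarrow(4)\Rightarrow(1)$, so that all four conditions become equivalent.

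For $(1)\Rightarrow(2)$ I would start from an $\mathcal{F}$-complete resolution $T\stackrel{\tau}{\longrightarrow}P\stackrel{\pi}{\longrightarrow}X$ witnessing $\Dpd_{R}(X)\leq n$, so that $\tau_{i}$ is bijective for all $i\geq n$. Since $\tau$ is a chain map, these bijections assemble into an isomorphism of the truncated complexes $T_{n}\sqsupset\,\cong\,P_{n}\sqsupset$, and passing to cokernels yields $\C_{n}(P)\cong\C_{n}(T)$. Because $T$ is totally $\mathcal{F}$-acyclic, $\C_{n}(T)$ is Ding projective, hence so is $\C_{n}(P)$. Exactness of $T$ together with the same isomorphism of truncations forces $\h_{i}(P)=0$ for all $i\geq n+1$, i.e. $\sup X\leq n$; this is precisely condition $(2)$ for the resolution $P$.

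The main obstacle is $(2)\Rightarrow(3)$, where an existential statement about one resolution must be upgraded to a universal one. Given a DG-projective resolution $P$ with $\C_{n}(P)$ Ding projective and an arbitrary DG-projective resolution $P'\longrightarrow X$, I would invoke the comparison theorem to produce a homotopy equivalence $P\to P'$ over $X$. Since $\sup X\leq n$, the truncations $\cdots\to P_{n+1}\to P_{n}\to\C_{n}(P)\to 0$ and $\cdots\to P'_{n+1}\to P'_{n}\to\C_{n}(P')\to 0$ are genuine projective resolutions, and a homotopy equivalence of DG-projective complexes identifies these cosyzygies up to projective summands, giving $\C_{n}(P)\oplus Q\cong\C_{n}(P')\oplus Q'$ with $Q,Q'$ projective (a generalized Schanuel argument). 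As every projective module is Ding projective and $\DP(R)$ is closed under direct sums and direct summands by Lemma~\ref{lem2.4}(2), it follows that $\C_{n}(P')$ is Ding projective. Establishing this resolution-independence cleanly is the technical heart of the theorem.

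For $(3)\Rightarrow(4)$ I would fix any DG-projective resolution $P'\longrightarrow X$; by $(3)$ the module $\C_{n}(P')$ is Ding projective, and the truncation of $P'$ above degree $n$ is a projective resolution of it. Lemma~\ref{lem2.6}, applied in degree $n$, then yields a totally $\mathcal{F}$-acyclic complex $T$ with $T_{n}\sqsupset\,=\,(P')_{n}\sqsupset$, and Lemma~\ref{lem2.2} extends the identity on this truncation to a morphism $\tau\colon T\to P'$ with $\tau_{i}=\id$ for all $i\geq n$, producing an $\mathcal{F}$-complete resolution. Finally, Lemma~\ref{lem2.9} taken with $g=n$ replaces $T$ by a surjective $T'$ without altering the degrees $\geq n$, so the identities $\tau'_{i}=\id$ survive and $(4)$ holds. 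The implication $(4)\Rightarrow(1)$ is then immediate, since a surjective $\mathcal{F}$-complete resolution with $\tau'_{i}=\id$ for $i\geq n$ is in particular an $\mathcal{F}$-complete resolution with $\tau'_{i}$ bijective for $i\geq n$, whence $\Dpd_{R}(X)\leq n$ by definition.
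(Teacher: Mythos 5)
Your cycle $(1)\Rightarrow(2)\Rightarrow(3)\Rightarrow(4)\Rightarrow(1)$ is the same as the paper's, and three of the four implications are carried out essentially as in the paper: $(1)\Rightarrow(2)$ by reading off $\C_{n}(P)\cong\C_{n}(T)$ and $\sup X\leq n$ from the isomorphism of truncations, $(3)\Rightarrow(4)$ via Lemmas \ref{lem2.6}, \ref{lem2.2} and \ref{lem2.9}, and $(4)\Rightarrow(1)$ trivially. The problem is the step you yourself call the technical heart, $(2)\Rightarrow(3)$. The ``generalized Schanuel argument'' you invoke does not apply here. Schanuel compares syzygies (kernels) in resolutions of one and the same module, descending through finitely many projective terms; but $\C_{n}(P)$ and $\C_{n}(P')$ are \emph{cokernels} sitting at the bottom of the possibly unbounded tails $\cdots\rightarrow P_{n+1}\rightarrow P_{n}$ and $\cdots\rightarrow P'_{n+1}\rightarrow P'_{n}$, which are resolutions of two a priori different modules --- precisely the modules you are trying to compare. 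A homotopy equivalence $P\rightarrow P'$ does not restrict to a homotopy equivalence of the hard truncations $P_{n}\sqsupset$ and $P'_{n}\sqsupset$ (the homotopy identity in degree $n$ involves $h_{n-1}\colon P_{n-1}\rightarrow P_{n}$), so no comparison of finite resolutions of a common module is available; and since $X$ is an arbitrary complex, $P$ and $P'$ need not be bounded below, so one also cannot run the mapping-cone/resolving induction from the right-hand end (that is the argument the paper uses later, in Theorem \ref{the2.27}, where $X$ is assumed homologically bounded below). The stable isomorphism $\C_{n}(P)\oplus Q\cong\C_{n}(P')\oplus Q'$ you assert is true, but in your sketch it is unproved.

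The paper closes exactly this gap by a different device: it replaces the comparison map by a \emph{surjective} quasi-isomorphism $P'\rightarrow P$ (adding a surjection $F\rightarrow P$ from a projective complex to $P'$ if necessary). Its kernel $K$ is then an exact DG-projective complex, hence a contractible complex of projectives, so every $\C_{n}(K)$ is projective; the degrees stay aligned and one obtains a short exact sequence $0\rightarrow\C_{n}(K)\rightarrow\C_{n}(P')\rightarrow\C_{n}(P)\rightarrow 0$ with $\C_{n}(K)$ projective and $\C_{n}(P)$ Ding projective, whence $\C_{n}(P')\in\DP(R)$ by the resolving property in Lemma \ref{lem2.4}(2). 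To repair your write-up you should either adopt this kernel argument or supply a precise, proved statement of the stable-isomorphism claim valid for unbounded complexes of projectives; as it stands, $(2)\Rightarrow(3)$ is a genuine gap.
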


\begin{proof} $(1)\Rightarrow (2)$ By hypothesis,
 there exists an $\mathcal{F}$-complete resolution $T\stackrel{\tau}\longrightarrow
P\stackrel{\pi}\longrightarrow X$ such that $\tau_{n}\sqsupset~: T_{n}\sqsupset~\longrightarrow P_{n}\sqsupset$ is an isomorphism of complexes. This yields isomorphisms $\h_{i}(X)\cong \h_{i}(P)$ for all $i\in
\mathbb{Z}$, $\h_{i}(P)\cong \h_{i}(T)$ for all $i>n$,
and $\C_{n}(P)\cong \C_{n}(T)$.
Since the complex $T$ is totally $\mathcal{F}$-acyclic, we have $\h_{i}(T)=0$ for each $i\in \mathbb{Z}$ and $\C_{n}(T)$ is Ding projective.

$(2)\Rightarrow (3)$ Let $P'\longrightarrow X$ be a DG-projective resolution. Then $P\simeq P'$. Since $P'$ is DG-projective, there exists a quasi-isomorphism $P'\longrightarrow P$. We can assume that $P'\longrightarrow P$ is a surjective quasi-isomorphism (~if not, let let $F\longrightarrow P$ be surjective with $F$ a projective complex, then $F\oplus P'\longrightarrow P$ is a surjective quasi-isomorphism~). Hence there exists an exact sequence
\[0\longrightarrow K\longrightarrow P'\longrightarrow P\longrightarrow 0\]
with $K$ an exact complex. Both $P'$ and $P$ are DG-projective complexes, so $K$ is a DG-projective complex.
Thus $K$ is exact and DG-projective, and so $K$ is a projective complex. In addition, we have an exact sequence
\[0\longrightarrow \C_{n}(K)\longrightarrow \C_{n}(P')\longrightarrow \C_{n}(P)\longrightarrow 0.\]
By (2), we get that $\C_{n}(P)$ is Ding projective. But $\C_{n}(K)$ projective, and so $\C_{n}(K)$ is Ding projective. It follows that $\C_{n}(P')$ is Ding projective by Lemma \ref{lem2.4}.

$(3)\Rightarrow (4)$ Let $P'\longrightarrow X$ be a DG-projective resolution with $\C_{n}(P')$ Ding projective and $\h_{i}(P')=0$ for all $i>n$. Then $\Sigma^{-n}P'_{n}\sqsupset~\longrightarrow \C_{n}(P')$ is a projective resolution.  By Lemma \ref{lem2.6}, there is a totally $\mathcal{F}$-acyclic complex $T''$ such that $T''_{n}\sqsupset~=P'_{n}\sqsupset$. So we obtain an $\mathcal{F}$-complete resolution $T''\stackrel{\tau''}\longrightarrow P'\stackrel{\pi'}\longrightarrow X$ with $\tau_{i}''=\id^{T_{i}''}$ for all $i\geq n$ and $\C_{n}(T'')\cong \C_{n}(P')$ by Lemma \ref{lem2.2}. From Lemma \ref{lem2.9}, we get a surjective $\mathcal{F}$-complete resolution $T'\longrightarrow P'\longrightarrow X$ with the desired properties.

$(4)\Rightarrow (1)$ is clear.
\end{proof}
\begin{corollary} For every family of complexes of $R$-modules $(X_{i})_{i\in I}$ one has \[\Dpd_{R}(\oplus_{I} X_{i})=\sup \{\Dpd_{R}(X_{i})|i\in I\}.\]
\end{corollary}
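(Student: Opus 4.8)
The plan is to reduce everything to the functorial characterization in \thref{the2.16}, together with the closure properties of $\DP(R)$ recorded in \leref{lem2.4}(2) and the elementary facts that homology and cokernels commute with arbitrary direct sums. Write $X=\oplus_{I}X_{i}$ and $s=\sup\{\Dpd_{R}(X_{i})\mid i\in I\}$. I would first dispose of the degenerate cases: if $s=-\infty$ then every $X_{i}$ is exact (recall $\Dpd_{R}(Y)=-\infty$ exactly when $Y$ is exact), whence $X$ is exact since $\h_{k}(X)=\oplus_{I}\h_{k}(X_{i})$, giving $\Dpd_{R}(X)=-\infty$; and if $s=+\infty$ the inequality $\Dpd_{R}(X)\le s$ is vacuous while the reverse follows from the finite case below. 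So I may assume $s=n$ is a finite integer and establish both inequalities.

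For $\Dpd_{R}(X)\le n$, I would, for each $i$, invoke \thref{the2.16} $(1)\Rightarrow(2)$ to choose a $\DG$-projective resolution $P^{(i)}\longrightarrow X_{i}$ with $\C_{n}(P^{(i)})$ Ding projective. Then $P:=\oplus_{I}P^{(i)}\longrightarrow X$ is again a $\DG$-projective resolution: it is a complex of projectives, the induced map is a quasi-isomorphism because $\h_{k}(P)=\oplus_{I}\h_{k}(P^{(i)})$, and $\DG$-projectivity persists because $\Hom_{R}(P,-)\cong\prod_{I}\Hom_{R}(P^{(i)},-)$ carries exact complexes to exact complexes (products being exact in a module category). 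Since cokernels commute with direct sums, $\C_{n}(P)=\oplus_{I}\C_{n}(P^{(i)})$, which is Ding projective by \leref{lem2.4}(2). As $\sup X_{i}\le\Dpd_{R}(X_{i})\le n$ for each $i$, we also get $\sup X=\sup_{i}\sup X_{i}\le n$. Hence \thref{the2.16} $(2)\Rightarrow(1)$ yields $\Dpd_{R}(X)\le n$.

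For $s\le\Dpd_{R}(X)$, I would show conversely that $\Dpd_{R}(X)\le m$ forces $\Dpd_{R}(X_{i})\le m$ for every $i$. Starting from arbitrary $\DG$-projective resolutions $P^{(i)}\longrightarrow X_{i}$, form the same direct-sum resolution $P=\oplus_{I}P^{(i)}\longrightarrow X$. By \thref{the2.16} $(1)\Rightarrow(3)$, the module $\C_{m}(P)=\oplus_{I}\C_{m}(P^{(i)})$ is Ding projective, so each summand $\C_{m}(P^{(i)})$ is Ding projective by closure of $\DP(R)$ under direct summands (\leref{lem2.4}(2)); moreover $\sup X_{i}\le\sup X\le m$. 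Applying \thref{the2.16} $(2)\Rightarrow(1)$ to each $X_{i}$ gives $\Dpd_{R}(X_{i})\le m$, hence $s\le m$. Taking $m=\Dpd_{R}(X)$ completes the reverse inequality, and the two inequalities give the claimed equality.

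The only point requiring genuine care — and where I expect to spend the most effort — is the bookkeeping that $P=\oplus_{I}P^{(i)}$ really is a $\DG$-projective resolution of $X$, specifically the preservation of $\DG$-projectivity under arbitrary (possibly infinite) direct sums via the identity $\Hom_{R}(\oplus_{I}P^{(i)},Y)\cong\prod_{I}\Hom_{R}(P^{(i)},Y)$ and the exactness of products, together with the compatibility $\C_{n}(\oplus_{I}P^{(i)})=\oplus_{I}\C_{n}(P^{(i)})$. Once these are in place, the rest is an immediate application of \thref{the2.16} and \leref{lem2.4}(2).
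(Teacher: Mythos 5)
Your proposal is correct and follows essentially the same route as the paper: form the direct sum $P=\oplus_{I}P^{(i)}$ of DG-projective resolutions, note that homology and cokernels commute with direct sums, and apply Theorem \ref{the2.16} together with the closure of $\DP(R)$ under direct sums and summands (the paper cites Lemma \ref{lem2.7}(2), which packages the same closure facts at the module level). Your write-up merely makes explicit the bookkeeping (DG-projectivity of the direct sum via $\Hom_{R}(\oplus_{I}P^{(i)},-)\cong\prod_{I}\Hom_{R}(P^{(i)},-)$ and the degenerate cases $s=\pm\infty$) that the paper leaves implicit.
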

\begin{proof} For each $i\in I$, there is a DG-projective resolution $P_{i}\longrightarrow X_{i}$. Set $P=\oplus_{I} P_{i}$.  Then $P\longrightarrow \oplus_{I} X_{i}$ is a DG-projective resolution and $\C_{n}(P)=\oplus_{I} \C_{n}(P_{i})$ for each $n\in \mathbb{Z}$. Thus the assertion follows from Theorem \ref{the2.16} and Lemma \ref{lem2.7}.
\end{proof}
\begin{corollary} Let $M$ be an $R$-module. Then
~$\Dpd_{R}(\s^{0}(M))
=\Dpd(M)$.
\end{corollary}
\begin{proof} Let
\[\cdots\longrightarrow P_{2}\longrightarrow P_{1}\longrightarrow P_{0}\longrightarrow M\longrightarrow 0\]
be a projective resolution of $M$. Then $P\longrightarrow \s^{0}(M)$ is a DG-projective resolution, where $P=\cdots\longrightarrow P_{2}\longrightarrow P_{1}\longrightarrow P_{0}\longrightarrow0\longrightarrow\cdots$. If $\Dpd(M)=\infty$, and $\Dpd_{R}(\s^{0}(M))=l<\infty$, then $\C_{j}(P)$ is Ding projective for any $j\geq l$ by Theorem \ref{the2.16}.
Since
\[0\longrightarrow \C_{l}(P)\longrightarrow P_{l-1}\longrightarrow \cdots\longrightarrow P_{1}\longrightarrow P_{0}\longrightarrow M\longrightarrow 0\]
is exact with $\C_{l}(P)$ Ding projective and $P_{j}$ projective modules, it follows that $\Dpd(M)\leq l$.  This contradicts $\Dpd(M)=\infty$. So $\Dpd_{R}(\s^{0}(M))=\infty$.
If $\Dpd(M)=l<\infty$, then $\C_{l}(P)$ is Ding projective, and so $\C_{j}(P)$ is Ding projective for all $j\geq l$ by Lemma \ref{lem2.4}. Hence $P\rightarrow \s^{0}(M)$ is a DG-projective resolution with $\C_{j}(P)$ Ding projective and $\h_{j}(P)=0$ for all $j\geq l$. By Theorem \ref{the2.16}, $\Dpd_{R}(\s^{0}(M))\leq l$. Suppose that $\Dpd_{R}(\s^{0}(M))\leq l-1$. Then $C_{l-1}(P)$ is Ding projective. In the exact sequence \[0\longrightarrow C_{l-1}(P)\longrightarrow P_{l-2}\longrightarrow \cdots\longrightarrow P_{1}\longrightarrow P_{0}\longrightarrow M\longrightarrow 0,\]
$\C_{l-1}(P)$ is Ding projective and
every $P_{j}$ is projective, which yields $\Dpd(M)\leq l-1$. This contradicts $\Dpd(M)=l$. Therefore, $\Dpd_{R}(\s ^{0}(M))=l$.
\end{proof}

\begin{corollary} Let $X$ be a complex. Then $\Gpd_{R}(X)\leq \Dpd_{R}(X)\leq \pd_{R}(X)$, with equalities if $\pd_{R}(X)$ is finite.
\end{corollary}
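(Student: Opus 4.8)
The two displayed inequalities encode the inclusions of module classes $\{\text{projective}\}\subseteq\DP(R)\subseteq\{\text{Gorenstein projective}\}$: the first because every projective module is Ding projective, the second because a totally $\mathcal{F}$-acyclic complex is totally acyclic (noted after Definition~2.1), so that every Ding projective module is Gorenstein projective. The plan is to read all three dimensions off the \emph{same} $\DG$-projective resolution $P\longrightarrow X$ through its cokernel $\C_n(P)$: Theorem~\ref{the2.16} does this for $\Dpd$, the characterization of Avramov--Foxby \cite{Avramov1991} does it for $\pd$, and Veliche's analogue \cite{Veliche2006} does it for $\Gpd$. I may assume throughout that $X$ is not exact, since otherwise all three dimensions equal $-\infty$ and there is nothing to prove.

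For $\Dpd_R(X)\le\pd_R(X)$, suppose $\pd_R(X)=n$ is finite (the case $\pd_R(X)=\infty$ being vacuous). Fixing a $\DG$-projective resolution $P\longrightarrow X$, finiteness of $\pd$ gives $\sup X\le n$ and $\C_n(P)$ projective, hence Ding projective, so the implication $(2)\Rightarrow(1)$ of Theorem~\ref{the2.16} yields $\Dpd_R(X)\le n$. The inequality $\Gpd_R(X)\le\Dpd_R(X)$ is symmetric: if $\Dpd_R(X)=n$ is finite, Theorem~\ref{the2.16} furnishes a $\DG$-projective resolution with $\C_n(P)$ Ding projective, hence Gorenstein projective, and Veliche's cokernel criterion gives $\Gpd_R(X)\le n$.

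For the equalities when $\pd_R(X)$ is finite, the crux is the rigidity statement \emph{a Ding projective module of finite projective dimension is projective}. Granting it, put $n=\pd_R(X)$ and $m=\Dpd_R(X)$, so $m\le n$; for a $\DG$-projective resolution $P\longrightarrow X$ the hard truncation yields an exact sequence
\[0\longrightarrow\C_n(P)\longrightarrow P_{n-1}\longrightarrow\cdots\longrightarrow P_m\longrightarrow\C_m(P)\longrightarrow 0\]
with $\C_n(P)$ and the $P_i$ projective, so $\C_m(P)$---which is Ding projective---has finite projective dimension and is therefore projective by the rigidity lemma; then the Avramov--Foxby criterion forces $\pd_R(X)\le m$, whence $\Dpd_R(X)=\pd_R(X)$. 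Running the same argument with ``Gorenstein projective'' in place of ``Ding projective'' (the corresponding rigidity being Veliche's analogue of Holm's theorem) gives $\Gpd_R(X)=\pd_R(X)$, so all three coincide. The rigidity lemma is the only real content, and it drops out of Lemma~\ref{lem2.4}(1): given $M\in\DP(R)$ with $\pd_R(M)=d<\infty$, I may assume $d\ge 1$; choosing a short exact sequence $0\to K\to Q\to M\to 0$ with $Q$ projective, the syzygy $K$ satisfies $\pd_R(K)=d-1<\infty$, whence $\fd_R(K)<\infty$ and $\Ext^1_R(M,K)=0$ by Lemma~\ref{lem2.4}(1); thus the sequence splits, $M$ is a direct summand of the projective module $Q$, contradicting $d\ge 1$, and so $M$ is projective. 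Keeping this lemma and the $\pm\infty$ boundary cases straight is the main point to watch; the inequalities themselves are soft consequences of the class inclusions.
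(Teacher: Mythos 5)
Your proof is correct. For the two inequalities you argue exactly as the paper does: read $\pd$, $\Dpd$ and $\Gpd$ off the cokernels of a $\DG$-projective resolution via Theorem~\ref{the2.16} and its Avramov--Foxby and Veliche counterparts, using the inclusions $\{\text{projective}\}\subseteq\DP(R)\subseteq\{\text{Gorenstein projective}\}$. Where you diverge is the equality statement: the paper disposes of it in one line by citing Veliche's Theorem~3.7, which gives $\Gpd_{R}(X)=\pd_{R}(X)$ whenever $\pd_{R}(X)<\infty$, and then squeezes $\Dpd_{R}(X)$ between two equal quantities. You instead prove the rigidity lemma \emph{Ding projective of finite projective dimension implies projective} directly from Lemma~\ref{lem2.4}(1) (the splitting of $0\to K\to Q\to M\to 0$ via $\Ext^{1}_{R}(M,K)=0$ is exactly right), apply it to $\C_{m}(P)$, and run the cokernel criterion backwards to force $\pd_{R}(X)\leq\Dpd_{R}(X)$. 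Your route buys a proof of $\Dpd_{R}(X)=\pd_{R}(X)$ that is self-contained in the paper's own lemmas, at the cost of being longer and still needing the Gorenstein-projective rigidity from the literature for the remaining equality; the paper's citation of Veliche is shorter but imports the whole equality from outside. Both arguments are sound, and you handle the $\pm\infty$ boundary cases correctly.
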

\begin{proof} If $\pd_{R}(X)=\infty$, then it is clear.
If $\pd_{R}(X)=-\infty$, then $X$ is exact, so
$\Gpd_{R}(X)=\Dpd_{R}(X)=-\infty$.
Let $\pd_{R}(X)=g<\infty$. Then for any DG-projective resolution $P\longrightarrow
X$ we have $\sup P\leq g$ and $\C_{j}(P)$ is projective for all $j\geq g$.
So $\C_{j}(P)$ is Ding projective for all $j\geq g$. By Theorem \ref{the2.16}, we get $\Dpd_{R}(X)\leq
g$. In similar method, we obtain that $\Gpd_{R}(X)\leq k$ if $\Dpd_{R}(X)=k$. The final assertion follows from Theorem 3.7 in \cite{Veliche2006}.
\end{proof}

\begin{proposition}Let $0\longrightarrow X\longrightarrow Y\longrightarrow Z\longrightarrow 0$ be an exact sequence of complexes. If two complexes have finite Ding projective dimension, then so does the third.
\end{proposition}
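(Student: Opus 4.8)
The plan is to reduce the statement to the classical two-out-of-three property for the Ding projective dimension of modules via the functorial characterization supplied by \thref{2.16}. First I would fix DG-projective resolutions compatibly with the short exact sequence. By \cite[Corollary 3.10]{Enochs1996} each of $X$, $Y$, $Z$ has a surjective DG-projective resolution, and because $P_Z$ is DG-projective the sequence $0\longrightarrow X\longrightarrow Y\longrightarrow Z\longrightarrow 0$ lifts to a degreewise-split short exact sequence of complexes
\[
0\longrightarrow P_X\longrightarrow P_Y\longrightarrow P_Z\longrightarrow 0
\]
of DG-projective resolutions. This is the standard horseshoe-type construction for complexes, and I would cite it rather than reconstruct it.

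Next I would pass to the truncation cokernels. Since the sequence of complexes is split in each degree, applying $\C_n(-)$ for a fixed integer $n$ yields a short exact sequence of $R$-modules
\[
0\longrightarrow \C_n(P_X)\longrightarrow \C_n(P_Y)\longrightarrow \C_n(P_Z)\longrightarrow 0.
\]
By \thref{2.16}, the Ding projective dimension of each complex is governed by when these cokernel modules become Ding projective: $\Dpd_R(X)\le n$ precisely when $\sup X\le n$ and $\C_n(P_X)$ is Ding projective, and similarly for $Y$ and $Z$. So the problem is transferred entirely onto the three module sequences $\C_n(P_X),\C_n(P_Y),\C_n(P_Z)$ indexed by $n$, together with the elementary fact that $\sup$ satisfies two-out-of-three along an exact sequence of homology long exact sequence (which controls the $\sup X\le n$ side conditions).

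Now I would choose $n$ large enough to exceed all the finite dimensions in play and invoke the module-level two-out-of-three behaviour, which follows from the projectively resolving property and the Ext-vanishing criteria in \leref{lem2.4} and \leref{lem2.7}. Concretely, for the three cases: if $\Dpd_R(X)$ and $\Dpd_R(Z)$ are finite, pick $n$ above both, so $\C_n(P_X)$ and $\C_n(P_Z)$ are Ding projective; since $\DP(R)$ is closed under extensions (part of being projectively resolving), $\C_n(P_Y)$ is Ding projective, giving finite $\Dpd_R(Y)$. If $\Dpd_R(X)$ and $\Dpd_R(Y)$ are finite, then for large $n$ both $\C_n(P_X)$ and $\C_n(P_Y)$ are Ding projective; the sequence shows $\Ext^1_R(\C_n(P_Z),F)=0$ for every flat $F$ once $n$ exceeds the Ding projective dimensions, so by \leref{lem2.4}(3) $\C_n(P_Z)$ is Ding projective. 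The remaining case, $Y$ and $Z$ finite, uses that $\DP(R)$ is closed under kernels of epimorphisms between its members (again projective resolvingness).

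The main obstacle I expect is bookkeeping of the boundedness side conditions $\sup X\le n$ rather than the Ding projective part: one must verify that when two of the three complexes are homologically bounded above in the appropriate range, so is the third, and that the chosen $n$ simultaneously dominates the relevant suprema and the two finite Ding projective dimensions. This is handled cleanly by the homology long exact sequence associated to $0\longrightarrow X\longrightarrow Y\longrightarrow Z\longrightarrow 0$, which shows $\sup Y\le\max\{\sup X,\sup Z\}$ and the two other two-out-of-three inequalities for $\sup$. Once a single integer $n$ is fixed that is large enough, the argument is a direct application of the module-level closure properties, and no further genuinely new difficulty arises.
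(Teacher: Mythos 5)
Your overall strategy coincides with the paper's: lift the short exact sequence to a degreewise\mbox{-}split short exact sequence of DG-projective resolutions (the paper cites \cite[Proposition 1.3.8]{Veliche2006} for exactly this), pass to the cokernels $\C_{j}(-)$ in degrees $j$ above which all three resolutions are exact, and conclude with the module-level closure properties and Theorem \ref{the2.16}. Two of your three cases are handled just as in the paper. However, the case in which $X$ and $Y$ have finite Ding projective dimension contains a step that fails as written. From the exact sequence
\[0\longrightarrow \C_{n}(P^{X})\longrightarrow \C_{n}(P^{Y})\longrightarrow \C_{n}(P^{Z})\longrightarrow 0\]
with the two left-hand terms Ding projective, the long exact sequence of $\Ext_{R}(-,F)$ gives $\Ext_{R}^{i}(\C_{n}(P^{Z}),F)=0$ only for $i\geq 2$; in degree one it identifies $\Ext_{R}^{1}(\C_{n}(P^{Z}),F)$ with the cokernel of the restriction map $\Hom_{R}(\C_{n}(P^{Y}),F)\rightarrow \Hom_{R}(\C_{n}(P^{X}),F)$, which has no reason to vanish. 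Any module admitting a length-one resolution by Ding projectives sits in such a sequence without being Ding projective in general, so the hypothesis of Lemma \ref{lem2.4}(3) that you invoke is precisely what is missing. Taking ``$n$ large enough'' does not help: the dimensions you can dominate are those of $X$ and $Y$, whereas the required $\Ext$-vanishing concerns $Z$, whose finiteness is the very thing being proved.

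The repair is the one the paper uses: the displayed sequence shows only that $\Dpd(\C_{n}(P^{Z}))\leq 1$, and one then shifts up one degree. Since $\h_{n+1}(P^{Z})=0$, the sequence $0\rightarrow \C_{n+1}(P^{Z})\rightarrow P^{Z}_{n}\rightarrow \C_{n}(P^{Z})\rightarrow 0$ exhibits $\C_{n+1}(P^{Z})$ as a first syzygy, in a resolution by Ding projective modules, of a module of Ding projective dimension at most one; Lemma \ref{lem2.7}(1) then forces $\C_{n+1}(P^{Z})$ to be Ding projective, and Theorem \ref{the2.16} gives $\Dpd_{R}(Z)\leq n+1<\infty$. (Equivalently, dimension shifting along this sequence turns the needed $\Ext^{1}$-vanishing for $\C_{n+1}(P^{Z})$ into the $\Ext^{2}$-vanishing for $\C_{n}(P^{Z})$ that the long exact sequence does provide.) With this one-degree shift your argument goes through; the boundedness bookkeeping via the homology long exact sequence and the other two cases via the projectively resolving property of $\DP(R)$ match the paper's proof.
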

\begin{proof} By \cite[Proposition 1.3.8]{Veliche2006},
there is an exact sequence of complexes $0\longrightarrow P^{X}\longrightarrow
P^{Y}\longrightarrow P^{Z}\longrightarrow
0$ with $P^{X}\longrightarrow X$, $P^{Y}\longrightarrow
Y$ and $P^{Z}\longrightarrow Z$ DG-projective resolutions. If two of the complexes $X, Y,
Z$ have finite Ding projective dimension,
 then there is $n\in \mathbb{Z}$ such that $\h_{j}(P^{X})=\h_{j}(P^{Y})=\h_{j}(P^{Z})=0$ for all $j\geq n$.  For each $j\geq n$,
we have an exact sequence
\[0\longrightarrow
\C_{j}(P^{X})\longrightarrow \C_{j}(P^{Y})\longrightarrow
\C_{j}(P^{Z})\longrightarrow 0\]
in $R$-Mod. If $\C_{j}(P^{Z})$ is Ding projective, then $\C_{j}(P^{X})$ is Ding projective if and only if $\C_{j}(P^{Y})$ is Ding projective.
If $\C_{j}(P^{X})$ and $\C_{j}(P^{Y})$ are Ding projective, then $\Dpd(\C_{j}(P^{Z}))\leq 1$, and so $\C_{j+1}(P^{Z})$ is Ding projective by Lemma \ref{lem2.7}. Therefore the assertion follows from Theorem \ref{the2.16}.
\end{proof}

\begin{proposition} \label{prop2.22} Let $X$ be a homologically bounded below complex. Then
\[\Dpd_{R}(X)=\inf\left\{\sup\left\{l\in \mathbb{Z}~\bigg{|}~
Q_{l}\neq0\right\}~\rule[-6mm]{0.2mm}{15mm}~
\begin{aligned}
&Q\simeq X ~~\text{and}~~Q ~~\text{is a bounded below}\\
& \text{complex of Ding projective modules}\\
  \end{aligned}\right\}
.\]
\end{proposition}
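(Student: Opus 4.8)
The plan is to abbreviate the right-hand side as $d(X)$ and to prove the two inequalities $d(X)\le\Dpd_R(X)$ and $\Dpd_R(X)\le d(X)$ separately. The bridge in both directions is Theorem~\ref{the2.16}, which converts $\Dpd_R(X)\le n$ into the two conditions $\sup X\le n$ and ``$\C_n(P)$ is Ding projective for a (equivalently, every) $\DG$-projective resolution $P\to X$.'' I will use repeatedly that $\Dpd_R$ is an invariant of the quasi-isomorphism class of $X$: by Theorem~\ref{the2.16} it is computed from $\DG$-projective resolutions, and quasi-isomorphic complexes share these. The exact case is disposed of at once, since then $X$ is quasi-isomorphic to $0$ and both sides equal $-\infty$.

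For $d(X)\le\Dpd_R(X)$ I may assume $n:=\Dpd_R(X)$ is finite. By Theorem~\ref{the2.16} there is a $\DG$-projective resolution $P\to X$ with $\sup X\le n$ and $\C_n(P)$ Ding projective, and since $X$ is homologically bounded below I take $P$ bounded below. First I would form the truncation $Q$ with $Q_i=P_i$ for $i<n$, with $Q_n=\C_n(P)$, with $Q_i=0$ for $i>n$, and with the map $Q_n\to Q_{n-1}$ induced by $\delta^P_n$. Because $\h_i(P)=0$ for $i>n$, the canonical surjection $P\to Q$ is a quasi-isomorphism, so $Q\simeq X$; and $Q$ is a bounded below complex of Ding projective modules (the $P_i$ are projective, $\C_n(P)$ is Ding projective). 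Hence $Q$ is admissible in the infimum and $d(X)\le n$.

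For $\Dpd_R(X)\le d(X)$ it suffices, by quasi-isomorphism invariance, to show $\Dpd_R(Q)\le m$ for every bounded below complex $Q$ of Ding projective modules with $m:=\sup\{l\mid Q_l\neq0\}$ finite (such a $Q$ is in fact bounded); taking the infimum over the admissible $Q\simeq X$ then gives the inequality. I would induct on the number of nonzero terms of $Q$. The base case $Q=\s^m(Q_m)$ is settled by $\Dpd_R(\s^m(Q_m))=\Dpd(Q_m)+m=m$, using the corollary computing $\Dpd_R(\s^0(-))$, the shift formula $\Dpd_R(\Sigma^mY)=\Dpd_R(Y)+m$, and $\Dpd(Q_m)=0$. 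For the step I use the (degreewise split) short exact sequence of complexes $0\to\sigma_{\le m-1}Q\to Q\to\s^m(Q_m)\to0$, where $\sigma_{\le m-1}Q$ is the brutal truncation. As in the preceding proposition on short exact sequences, \cite[Proposition 1.3.8]{Veliche2006} lifts it to a short exact sequence of $\DG$-projective resolutions $0\to P'\to P\to P''\to0$, and applying the degree-$m$ cokernel functor yields $0\to\C_m(P')\to\C_m(P)\to\C_m(P'')\to0$. Here $\C_m(P'')$ is Ding projective because $\Dpd_R(\s^m(Q_m))=m$, and $\C_m(P')$ is Ding projective because the induction hypothesis gives $\Dpd_R(\sigma_{\le m-1}Q)\le m-1\le m$, so Theorem~\ref{the2.16} applies at level $m$. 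Since $\DP(R)$ is projectively resolving, hence extension closed (Lemma~\ref{lem2.4}), $\C_m(P)$ is Ding projective; as $P\to Q\simeq X$ is a $\DG$-projective resolution with $\sup X\le m$, Theorem~\ref{the2.16} gives $\Dpd_R(Q)\le m$.

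The hard part is the inductive step just sketched, and the essential difficulty is that the terms $Q_i$ are merely Ding projective rather than projective, so $Q$ cannot be resolved term-by-term in any naive way and the top cokernel $\C_m(P)$ is not visibly Ding projective. The device that overcomes this is to peel off the top module $Q_m$ through a short exact sequence of complexes, lift to compatible $\DG$-projective resolutions, and reassemble $\C_m(P)$ as an extension of the two Ding projective cokernels $\C_m(P')$ and $\C_m(P'')=Q_m$; extension-closure of $\DP(R)$ then does the rest. A secondary point to verify carefully is that applying the degree-$m$ cokernel functor to the lifted sequence preserves exactness, which holds because the sequence of $\DG$-projective complexes is degreewise split, exactly the mechanism already exploited in the preceding proposition.
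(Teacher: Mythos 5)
Your proof is correct in substance, and the first inequality (soft-truncating a bounded below $\DG$-projective resolution at the degree-$n$ cokernel) is exactly the paper's argument. For the reverse inequality $\Dpd_R(X)\le m$, however, you take a genuinely different route. The paper does not induct: given a bounded complex $Q\simeq X$ of Ding projective modules with top degree $m$, it produces a \emph{surjective} quasi-isomorphism $P\oplus P^{*}\to Q$ with $P\to X$ a $\DG$-projective resolution and $P^{*}$ a bounded below projective complex, and analyzes the kernel $K$: it is a bounded below exact complex whose terms are Ding projective by the resolving property of $\DP(R)$, hence all its cokernels are Ding projective; the cokernel sequence $0\to \C_{m}(K)\to \C_{m}(P)\oplus\C_{m}(P^{*})\to Q_{m}\to 0$ (exact because $Q_{m+1}=0$) then forces $\C_{m}(P)\oplus\C_{m}(P^{*})$, and hence the summand $\C_{m}(P)$, to be Ding projective, and Theorem~\ref{the2.16} finishes. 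Your dévissage on the number of nonzero terms of $Q$, peeling off $\s^{m}(Q_{m})$ and lifting $0\to\sigma_{\le m-1}Q\to Q\to\s^{m}(Q_{m})\to 0$ to $\DG$-projective resolutions, reaches the same conclusion via extension-closure of $\DP(R)$; it reduces the complex case to the already-proved module case $\Dpd_R(\s^{0}(M))=\Dpd(M)$ at the cost of invoking \cite[Proposition 1.3.8]{Veliche2006} and an induction, whereas the paper's one-shot comparison is shorter and only needs closure under kernels of epimorphisms and direct summands.

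One justification in your inductive step should be repaired. The exactness of $0\to\C_{m}(P')\to\C_{m}(P)\to\C_{m}(P'')\to 0$ does \emph{not} follow from degreewise splitness of the lifted sequence: for a degreewise split exact sequence of complexes $0\to A\to B\to C\to 0$ the map $\C_{m}(A)\to\C_{m}(B)$ can fail to be injective (take $A=\s^{m}(\mathbb{Z})$, $C=\s^{m+1}(\mathbb{Z})$ and $B$ the complex $\mathbb{Z}\xrightarrow{\;1\;}\mathbb{Z}$ in degrees $m+1,m$). The obstruction is a connecting map that factors through $\h_{m+1}(C)$, so the correct reason here is $\h_{m+1}(P'')=\h_{m+1}(\s^{m}(Q_{m}))=0$; this is also the mechanism actually used in the paper's proposition on short exact sequences (where all homology vanishes in degrees $\ge n$). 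Since that vanishing does hold in your situation, the step is valid once the reason is stated correctly.
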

\begin{proof} Since $X$ is a homologically bounded below complex, we can assume $\inf X=0$. Let $P\rightarrow X$ be a DG-projective resolution of $X$ with $\inf\{i\mid
P_{i}\neq0\}=0$.
Set \[\Omega=\inf\left\{\sup\left\{l\in \mathbb{Z}~\bigg{|}~
Q_{l}\neq0\right\}~\rule[-6mm]{0.2mm}{15mm}~
\begin{aligned}
&Q\simeq X ~~\text{and}~~Q ~~\text{is a bounded below}\\
& \text{complex of Ding projective modules}\\
  \end{aligned}\right\}
.\]
If $\Dpd_{R}(X)=n$, then $\C_{j}(P)$ is Ding projective for every $j\geq n$ by Theorem \ref{the2.16}.
Let
\[P'=0\longrightarrow \C_{n}(P)\longrightarrow
P_{n-1}\longrightarrow \cdots\longrightarrow P_{0}\longrightarrow
0,\] and
\[X'=0\longrightarrow \C_{n}(X)\longrightarrow X_{n-1}\longrightarrow X_{n-2}\longrightarrow\cdots.\]
Since $P\simeq X$, we have $P\simeq X'$. From $P'\simeq X'$ and $X\simeq X'$, we get $P'\simeq X$. Each component of $P'$ is a Ding projective module, which implies that $\Omega\leq n$.

Now suppose that $\Omega=m<\infty$.
We are going to show that $\Dpd_{R}(X)\leq m$.
By hypothesis, there exists a complex
\[Q=0\longrightarrow Q_{m}\longrightarrow Q_{m-1}\longrightarrow\cdots\longrightarrow Q_{0}\longrightarrow 0\]
of Ding projective modules such that $Q\simeq X$. Since $P\simeq X\simeq Q$ and $P$ is a DG-projective complex, there is a quasi-isomorphism $P\longrightarrow Q$. In addition, $Q$ is bounded below, so there is a surjective morphism $P^{*}\longrightarrow Q$ with $P^{*}$ a bounded below projective complex. Then $P\oplus P^{*}\longrightarrow Q$ is a surjective quasi-isomorphism. Thus we have an exact sequence
\[0\longrightarrow K\longrightarrow P\oplus P^{*}\longrightarrow Q\longrightarrow 0,\]
 with $K$ exact, which implies that there is an exact sequence
\[0\longrightarrow K_{j}\longrightarrow P_{j}\oplus P^{*}_{j}\longrightarrow Q_{j}\longrightarrow 0\]
in $R$-Mod for all $j\in \mathbb{Z}$. Since $P_{j}\oplus P^{*}_{j}$ and $Q_{j}$ are Ding projective modules, it follows that $K_{j}$ is a Ding projective module. Thus $K$ is a bounded below exact complex of Ding projective modules, and so $\C_{j}(K)$ is a Ding projective module for each $j\in \mathbb{Z}$. In exact sequence
\[0\longrightarrow \C_{m}(K)\longrightarrow \C_{m}(P)\oplus \C_{m}(P^{*})\longrightarrow \C_{m}(Q)\longrightarrow 0,\]
$\C_{m}(Q)=Q_{m}$ and $\C_{m}(K)$ are Ding projective, so $\C_{m}(P)\oplus \C_{m}(P^{*})$ is Ding projective. By Lemma \ref{lem2.4}, $\C_{m}(P)$ is Ding projective. Since $P\longrightarrow X$ is a DG-projective resolution with $\sup P\leq m$ and $\C_{j}(P)$ Ding projective for all $j\geq m$, it follows that $\Dpd_{R}(X)\leq m$. By the above,
$\Dpd_{R}(X)=\infty$ if and only if $\Omega=\infty$; and note that
$\Dpd_{R}(X)=-\infty$ if and only if $X$ is exact if and only if
$\Omega=-\infty$.
\end{proof}

\begin{lemma}\label{lem2.23}

$\mathrm{(1)}$(\cite[Proposition 2.6(a)]{Christensen2006}) Let $\mathcal{U}$ be a class of $R$-modules, and $\alpha: X\rightarrow Y$ be a morphism in $\C(R)$ such that
\[\Hom_{R}(U, \alpha): \Hom_{R}(U, X)\stackrel{\simeq} \longrightarrow \Hom_{R}(U, Y)\]
is a quasi-isomorphism for every module $U\in \mathcal{U}$. If $\widetilde{U}\in \C_{\sqsupset}(R)$ is a complex consisting of modules from $\mathcal{U}$, then the induced morphism $\Hom_{R}(\widetilde{U}, \alpha): \Hom_{R}(\widetilde{U}, X)\stackrel{\simeq}\longrightarrow \Hom_{R}(\widetilde{U}, Y)$ is a quasi-isomorphism.

$\mathrm{(2)}$ (\cite[Proposition 2.7(a)]{Christensen2006}) Let $\mathcal{V}$ be a class of $R$-modules, and $\alpha: X\rightarrow Y$ be a morphism in $\C(R)$ such that
\[\Hom_{R}(\alpha, V): \Hom_{R}(X, V)\stackrel{\simeq}\longrightarrow \Hom_{R}(X, V)\]
is a quasi-isomorphism for every module $V\in \mathcal{V}$. If $\widetilde{V}\in \C_{\sqsubset}(R)$ is a complex consisting of modules from $\mathcal{U}$, then the induced morphism $\Hom_{R}(\alpha, \widetilde{V}): \Hom_{R}(X, \widetilde{V})\stackrel{\simeq}\longrightarrow \Hom_{R}(Y, \widetilde{V})$ is a quasi-isomorphism.
\end{lemma}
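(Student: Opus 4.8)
The plan is to reduce both parts to the exactness of a single Hom complex and then bootstrap from a single module to a one-sided bounded complex by a truncation-and-limit argument. I will carry out part (1) in detail; part (2) is dual. First I would replace $\alpha$ by its mapping cone. Setting $W=\Cone(\alpha)$ and using that the additive functors $\Hom_R(U,-)$ and $\Hom_R(\widetilde{U},-)$ commute with cones, the hypothesis that $\Hom_R(U,\alpha)$ is a quasi-isomorphism for every $U\in\mathcal{U}$ becomes the assertion that $\Hom_R(U,W)$ is exact for every such $U$, and the desired conclusion becomes that $\Hom_R(\widetilde{U},W)$ is exact. Thus everything reduces to the following: if $\Hom_R(U,W)$ is exact for each $U\in\mathcal{U}$ and $\widetilde{U}\in\C_{\sqsupset}(R)$ has all components in $\mathcal{U}$, then $\Hom_R(\widetilde{U},W)$ is exact.

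Next I would treat the bounded case by induction on the number of nonzero components. For a complex concentrated in a single degree, $\Hom_R(\s^{i}(M),W)$ is a shift of $\Hom_R(M,W)$, which is exact by hypothesis. For the inductive step I would use the brutal-truncation short exact sequence $0\to\sigma_{\le n-1}\widetilde{U}\to\sigma_{\le n}\widetilde{U}\to\s^{n}(\widetilde{U}_{n})\to 0$, which is degreewise split; applying the contravariant functor $\Hom_R(-,W)$ yields a short exact sequence of complexes, and its long exact homology sequence, together with the inductive hypothesis, forces the middle term $\Hom_R(\sigma_{\le n}\widetilde{U},W)$ to be exact.

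The main obstacle, and the step where boundedness below of $\widetilde{U}$ is essential, is passing from bounded complexes to $\widetilde{U}$ itself, which may be unbounded above. Here I would write $\widetilde{U}=\varinjlim_{n}\sigma_{\le n}\widetilde{U}$, a colimit along degreewise split inclusions. Since $\Hom_R(-,W)$ converts this colimit into the inverse limit $\Hom_R(\widetilde{U},W)=\varprojlim_{n}\Hom_R(\sigma_{\le n}\widetilde{U},W)$, and the transition maps of this tower are degreewise (split) surjections, the system is Mittag--Leffler, so the relevant $\varprojlim^{1}$ vanishes. The Milnor exact sequence then identifies the homology of the limit complex with the inverse limit of the homologies of the $\Hom_R(\sigma_{\le n}\widetilde{U},W)$, all of which vanish by the bounded case; hence $\Hom_R(\widetilde{U},W)$ is exact.

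For part (2) the same scheme applies with the variances reversed. One again sets $W=\Cone(\alpha)$, so that the hypothesis becomes exactness of $\Hom_R(W,V)$ for all $V\in\mathcal{V}$ and the goal becomes exactness of $\Hom_R(W,\widetilde{V})$. The bounded case proceeds by the analogous induction, and for the limit step one uses that the covariant functor $\Hom_R(W,-)$ commutes with inverse limits: writing the bounded-above complex $\widetilde{V}$ as $\varprojlim_{k}\sigma_{\ge k}\widetilde{V}$ along degreewise split surjections gives a Mittag--Leffler tower $\Hom_R(W,\sigma_{\ge k}\widetilde{V})$, and the identical $\varprojlim^{1}$ argument yields exactness of $\Hom_R(W,\widetilde{V})$. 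I expect the only genuinely delicate points to be the commutation of $\Hom$ with the chosen (co)limits and the verification that the towers are degreewise surjective, which is exactly what makes the Milnor sequence applicable.
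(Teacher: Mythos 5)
Your argument is correct. The paper itself gives no proof of this lemma --- it is quoted verbatim from \cite[Propositions 2.6(a) and 2.7(a)]{Christensen2006} --- and your reduction (pass to $W=\Cone(\alpha)$, handle bounded complexes by induction along the degreewise split brutal truncations, then pass to the one-sided bounded case via a Mittag--Leffler tower and the Milnor $\varprojlim^{1}$ sequence) is exactly the standard route taken in that source, with the two genuinely delicate points (commutation of $\Hom$ with the cone and with the relevant (co)limit, and degreewise surjectivity of the towers) correctly identified and justified.
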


\begin{lemma}\label{lem2.24} Let $V\stackrel{\simeq}\rightarrow W$ be a quasi-isomorphism between $R$-complexes, where $V,W\in \C_{\sqsubset}(R)$ and each module in $V$ and $W$ has finite flat or finite injective dimension. If $A\in C_{\sqsupset}(R)$ is a complex of Ding projective modules, then the induced morphism $\Hom_{R}(A, V)\rightarrow \Hom_{R}(A, W)$ is a quasi-isomorphism.
\end{lemma}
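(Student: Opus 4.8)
The plan is to reduce the statement for the complex $A$ to the corresponding statement for each individual Ding projective module, and then to invoke Lemma \ref{lem2.23}(1) with $\mathcal{U}=\DP(R)$. Writing $\alpha\colon V\to W$ for the given quasi-isomorphism and observing that $A\in\C_{\sqsupset}(R)$ is a complex consisting of modules from $\DP(R)$, it will suffice by Lemma \ref{lem2.23}(1) to establish that for every Ding projective module $U$ the induced map $\Hom_{R}(U,\alpha)\colon\Hom_{R}(U,V)\to\Hom_{R}(U,W)$ is a quasi-isomorphism; the passage from modules to the complex $A$ is then automatic, and that is the whole role played by the boundedness hypothesis $A\in\C_{\sqsupset}(R)$.

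So I would fix a Ding projective module $U$ and argue via the mapping cone. Since $\Hom_{R}(U,-)$ is additive and commutes with the formation of cones, one has $\Cone(\Hom_{R}(U,\alpha))\cong\Hom_{R}(U,\Cone\alpha)$, and because $\alpha$ is a quasi-isomorphism exactly when $\Cone\alpha$ is exact, the claim for $U$ becomes equivalent to the exactness of $\Hom_{R}(U,C)$, where $C=\Cone\alpha$. This is the complex I would actually work with: it is bounded above (as $V,W\in\C_{\sqsubset}(R)$), it is exact, and its modules are $C_{n}=V_{n-1}\oplus W_{n}$, each a direct sum of two modules of finite flat or finite injective dimension.

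The heart of the matter is then a dimension-shifting computation. Because $C$ is exact, every $\delta^{C}_{n+1}$ produces a short exact sequence $0\to\Z_{n+1}(C)\to C_{n+1}\to\Z_{n}(C)\to 0$, and because $C$ is bounded above there is a top index $s$ with $\Z_{s}(C)=0$, which supplies a genuine base case. Applying $\Hom_{R}(U,-)$ and using $\Ext^{i}_{R}(U,C_{n+1})=\Ext^{i}_{R}(U,V_{n})\oplus\Ext^{i}_{R}(U,W_{n+1})=0$ for all $i\geq1$ by Lemma \ref{lem2.4}(1), a descending induction on $n$ starting from $\Z_{s}(C)=0$ should yield $\Ext^{i}_{R}(U,\Z_{n}(C))=0$ for all $n$ and all $i\geq1$. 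Feeding the case $\Ext^{1}_{R}(U,\Z_{n+1}(C))=0$ back into the long exact sequence identifies the cohomology of $\Hom_{R}(U,C)$ in each degree with a subquotient of one of these vanishing $\Ext^{1}$ groups, so $\Hom_{R}(U,C)$ is exact; hence $\Hom_{R}(U,\alpha)$ is a quasi-isomorphism for every $U\in\DP(R)$, and Lemma \ref{lem2.23}(1) finishes the proof.

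I expect the main obstacle to be the bookkeeping in the dimension-shifting induction rather than any deep new idea: specifically, confirming that boundedness above really does provide the base case $\Z_{s}(C)=0$ so that the descending induction is well founded, and checking via the connecting maps of the long exact sequence that the cohomology of $\Hom_{R}(U,C)$ in each degree is indeed captured by $\Ext^{1}_{R}(U,\Z_{n+1}(C))$. The essential Ext-vanishing input, namely $\Ext^{i}_{R}(U,L)=0$ for $i>0$ and $L$ of finite flat or finite injective dimension, is handed to us directly by Lemma \ref{lem2.4}(1), so once these indexing points are settled the computation is routine.
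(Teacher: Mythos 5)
Your proof is correct, and after the common first step it takes a genuinely different route from the paper's. Both arguments begin identically, by invoking Lemma \ref{lem2.23}(1) with $\mathcal{U}=\DP(R)$ to reduce to the case of a single Ding projective module. From there the paper works on the source side: it replaces the Ding projective module $A$ by the left half $P\in\C_{\sqsupset}(R)$ of its totally $\mathcal{F}$-acyclic complex, uses Lemma \ref{lem2.4}(1) to see that $\Hom_{R}(\alpha,T)$ is a quasi-isomorphism for each module $T$ of finite flat or injective dimension, upgrades this to the bounded above complexes $V$ and $W$ via Lemma \ref{lem2.23}(2), and concludes by two-out-of-three in a commutative square whose bottom row $\Hom_{R}(P,V)\to\Hom_{R}(P,W)$ is a quasi-isomorphism because $P$ is a bounded below complex of projectives. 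You instead work on the target side: you pass to $C=\Cone\alpha$, a bounded above exact complex whose terms are direct sums of modules of finite flat or injective dimension, and run a descending dimension-shifting induction on the cycle modules $\Z_{n}(C)$ using only the Ext-vanishing of Lemma \ref{lem2.4}(1). Your indexing concerns are unfounded: if $C_{m}=0$ for $m>s$ then exactness gives $\Z_{s}(C)=\B_{s}(C)=0$, so the base case holds, and the surjectivity of $\Hom_{R}(U,C_{n+1})\to\Hom_{R}(U,\Z_{n}(C))$ forced by $\Ext^{1}_{R}(U,\Z_{n+1}(C))=0$ is exactly what kills the homology of $\Hom_{R}(U,C)$. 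Your version is more self-contained and elementary --- it needs neither Lemma \ref{lem2.23}(2) nor the DG-projectivity of bounded below complexes of projectives --- whereas the paper's version reuses machinery (the two halves of the totally $\mathcal{F}$-acyclic complex, the truncation lemmas) that is already in play elsewhere in the section.
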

\begin{proof}By Lemma \ref{lem2.23}(1), we may immediately reduce to the case, where $A$ is a Ding projective module. In this case we have quasi-isomorphisms $\alpha: P\stackrel{\simeq}\longrightarrow A$ and $\beta: A\stackrel{\simeq}\longrightarrow \widetilde{P}$ in $\C(R)$, where $P\in \C_{\sqsupset}(R)$ and $\widetilde{P}\in \C_{\sqsubset}(R)$ are respectively the left half and right half of a totally $\mathcal{F}$-acyclic complex of $A$. Let $T$ be any $R$-module of finite flat or finite injective dimension. Lemma \ref{lem2.4}(1) implies that a totally $\mathcal{F}$-acyclic complex stays exact when the functor $\Hom_{R}(-, T)$ is applied to it. In particular, the induced morphisms
\[\Hom_{R}(\alpha, T): \Hom_{R}(A, T)\stackrel{\simeq}\longrightarrow \Hom_{R}(P, T),\]
and
\[\Hom_{R}(\beta, T): \Hom_{R}(\widetilde{P}, T)\stackrel{\simeq}\longrightarrow \Hom_{R}(A, T)\]
are quasi-isomorphisms. By Lemma \ref{lem2.23}(2) it follows that $\Hom_{R}(\alpha, V)$ and $\Hom_{R}(\alpha, W)$ are quasi-isomorphisms. In the commutative diagram
\[\xymatrix{\Hom_{R}(A, V)\ar[r]\ar[d]_{\Hom_{R}(\alpha, V)}^{\simeq}&\Hom_{R}(A, W)\ar[d]^{\Hom_{R}(\alpha, V)}_{\simeq}\\
\Hom_{R}(P, V)\ar[r]^{\simeq}&\Hom_{R}(P, W)
}\]
the lower horizontal morphism is obviously a quasi-isomorphism, and this makes the induced morphism $\Hom_{R}(A, V)\rightarrow \Hom_{R}(A, W)$ a quasi-isomorphism as well. \end{proof}

\begin{lemma}\label{lem2.25} If $X\simeq A$, where $A\in \C_{\sqsupset}(R)$ is a complex of Ding projective modules, and $U\simeq V$, where $V\in \C_{\sqsubset}(R)$ is a complex in which each module has finite flat or finite injective dimension, then \[\RHom_{R}(X, U)\simeq \Hom_{R}(A, V).\]
\end{lemma}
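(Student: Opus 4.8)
The plan is to evaluate $\RHom_R(X,U)$ by resolving the first argument, then to move from $X$ to $A$ and from $U$ to $V$ in two separate steps, only one of which is delicate. Since $X\simeq A\in\C_{\sqsupset}(R)$, the complex $X$ is homologically bounded below, so I would fix a DG-projective resolution $\pi\colon P\to X$ with $P\in\C_{\sqsupset}(R)$. By the definition of the derived functor one has $\RHom_R(X,U)\simeq\Hom_R(P,U)$, and since $P$ is DG-projective the functor $\Hom_R(P,-)$ preserves quasi-isomorphisms; hence $U\simeq V$ gives $\Hom_R(P,U)\simeq\Hom_R(P,V)$. Thus $\RHom_R(X,U)\simeq\Hom_R(P,V)$, and the whole lemma reduces to producing a quasi-isomorphism $\Hom_R(A,V)\simeq\Hom_R(P,V)$.

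To compare the two sources, note that $P$ is DG-projective and $P\simeq X\simeq A$, so the isomorphism $P\cong A$ in the derived category is represented by an honest chain map $\gamma\colon P\to A$, which is then a quasi-isomorphism. I want $\Hom_R(\gamma,V)$ to be a quasi-isomorphism. Here I would invoke Lemma~\ref{lem2.23}(2) with $\mathcal V$ the class of modules of finite flat or finite injective dimension: since every module occurring in $V\in\C_{\sqsubset}(R)$ lies in $\mathcal V$, it suffices to check that $\Hom_R(\gamma,T)\colon\Hom_R(A,T)\to\Hom_R(P,T)$ is a quasi-isomorphism for each single module $T$ of finite flat or finite injective dimension. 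This is exactly the device that frees me from having to argue directly with the complex $V$.

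The key step is this single-module statement, which I would phrase via the mapping cone. The morphism $\Hom_R(\gamma,T)$ is a quasi-isomorphism if and only if $\Hom_R(\Cone\gamma,T)$ is exact. Now $C:=\Cone\gamma$ is bounded below (both $P,A\in\C_{\sqsupset}(R)$), exact (because $\gamma$ is a quasi-isomorphism), and each $C_n=P_{n-1}\oplus A_n$ is Ding projective, since projective modules are Ding projective and $\DP(R)$ is closed under direct sums by Lemma~\ref{lem2.4}(2). Because $C$ is a bounded-below exact complex of Ding projective modules and $\DP(R)$ is projectively resolving, a dimension-shifting argument from the bottom shows that every cycle module $\Z_j(C)$ is Ding projective. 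Applying $\Hom_R(-,T)$ to the short exact sequences $0\to\Z_j(C)\to C_j\to\Z_{j-1}(C)\to 0$ and using $\Ext^{i}_R(-,T)=0$ for $i>0$ on Ding projective modules (Lemma~\ref{lem2.4}(1)) keeps them exact, and splicing these yields the exactness of $\Hom_R(C,T)$. Assembling the chain $\RHom_R(X,U)\simeq\Hom_R(P,U)\simeq\Hom_R(P,V)\simeq\Hom_R(A,V)$ then finishes the argument. I expect this last dimension-shifting and Ext-vanishing step to be the main obstacle, since it is where the Ding projectivity of $A$ and the finiteness of the flat or injective dimension of the modules in $V$ are genuinely used; once $\Hom_R(C,T)$ is handled, the passage to the complex $V$ is automatic from Lemma~\ref{lem2.23}(2).
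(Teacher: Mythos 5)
Your argument is correct, but it takes a genuinely different route from the paper's. The paper keeps $A$ fixed and resolves the \emph{second} argument: it chooses a DG-injective resolution $V\stackrel{\simeq}\rightarrow I$, notes $\RHom_{R}(X,U)\simeq\RHom_{R}(A,V)\simeq\Hom_{R}(A,I)$, and then invokes Lemma~\ref{lem2.24} to identify $\Hom_{R}(A,V)$ with $\Hom_{R}(A,I)$; the real work is thus hidden in Lemma~\ref{lem2.24}, whose proof reduces via Lemma~\ref{lem2.23}(1) to a single Ding projective module and exploits both halves of its totally $\mathcal{F}$-acyclic complex together with Lemma~\ref{lem2.23}(2). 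You instead resolve the \emph{first} argument by a bounded below DG-projective $P\simeq X$, lift to an honest quasi-isomorphism $\gamma\colon P\to A$, and prove directly that $\Hom_{R}(\gamma,T)$ is a quasi-isomorphism for each test module $T$ by showing that the bounded below exact cone $\Cone\gamma$ has Ding projective cycles (projectively resolving, Lemma~\ref{lem2.4}(2)) and that $\Ext^{i}_{R}(-,T)$ vanishes on them (Lemma~\ref{lem2.4}(1)); Lemma~\ref{lem2.23}(2) then upgrades from modules $T$ to the complex $V$. Your route bypasses Lemma~\ref{lem2.24} and DG-injective resolutions entirely, and your cone-plus-dimension-shifting device is essentially the same comparison argument the paper itself deploys later in the proof of Theorem~\ref{the2.27} (3)$\Rightarrow$(4); what it costs is the extra step of lifting $P\simeq A$ to a chain map and the bookkeeping with $\Cone\gamma$, whereas the paper's version isolates a reusable statement (Lemma~\ref{lem2.24}) about varying the second variable. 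All the individual steps you use --- the existence of a bounded below DG-projective resolution of a homologically bounded below complex, the representability of derived-category maps out of a DG-projective complex, and the induction from the bottom of a bounded below exact complex of Ding projectives --- are sound and available from the paper's toolkit, so the proposal stands as a valid alternative proof.
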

\begin{proof}Assume that $V\stackrel{\simeq}\rightarrow I\in \C_{\sqsubset}(R)$ is a DG-injective resolution of $V$. We have
\[\RHom_{R}(X, U)\simeq \RHom_{R}(A, V)\simeq \Hom_{R}(A, I).\]
From Lemma \ref{lem2.24} we get a quasi-isomorphism $\Hom_{R}(A, V)\stackrel{\simeq}\longrightarrow \Hom_{R}(A, I)$, and the result follows.
\end{proof}

\begin{lemma}\label{lem2.26} Let $F$ be a flat $R$-module. If $X\simeq A$, where $X\in \C_{(\square)}(R)$ and $A\in \C_{\sqsupset}(R)$ is a complex of Ding projective modules and $n\geq \sup X$, then
\[\Ext_{R}^{m}(C_{n}^{A}, F)=\h_{-(m+n)}(\RHom_{R}(X, F)).\]
\end{lemma}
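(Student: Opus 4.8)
The plan is to compute $\RHom_R(X,F)$ by means of the given complex $A$, and then to recognise the homology module in degree $-(m+n)$ as an $\Ext$ group of the cokernel $C_n^A=\Coker(\delta_{n+1}^A)$. The first move is to pass from $\RHom$ to an honest Hom-complex: I would apply Lemma \ref{lem2.25} with $U=V=\s^{0}(F)$. This is legitimate because $F$ is flat, so $\s^{0}(F)\in\C_{\sqsubset}(R)$ is a stalk complex whose single module has finite flat dimension; the lemma then yields a quasi-isomorphism $\RHom_R(X,F)\simeq\Hom_R(A,F)$. Consequently $\h_{-(m+n)}(\RHom_R(X,F))=\h_{-(m+n)}(\Hom_R(A,F))$, and it suffices to compute the homology of the concrete complex $\Hom_R(A,F)$.

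Next I would manufacture a resolution of $C_n^A$ out of $A$. Since $A\simeq X$ and $n\geq\sup X$, we have $\h_i(A)=0$ for every $i>n$, so $A$ is exact in all degrees above $n$; together with the definition of the cokernel this shows that
\[ \cdots\longrightarrow A_{n+2}\longrightarrow A_{n+1}\longrightarrow A_{n}\longrightarrow C_n^A\longrightarrow 0 \]
is exact, i.e.\ the truncation of $A$ in degrees $\geq n$ is a left resolution of $C_n^A$ by Ding projective modules. Because $F$ is flat (hence of finite flat dimension), Lemma \ref{lem2.4}(1) gives $\Ext_R^{i}(A_j,F)=0$ for all $i>0$ and all $j$, so each $A_j$ is acyclic for the functor $\Hom_R(-,F)$. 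By the usual dimension-shifting argument this Ding projective resolution computes $\Ext_R^{\bullet}(C_n^A,F)$ exactly as a genuine projective resolution would.

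Finally I would match indices. For the stalk $F=\s^{0}(F)$, the complex $\Hom_R(A,F)$ reads, near homological degree $-(m+n)$,
\[ \Hom_R(A_{m+n-1},F)\longrightarrow\Hom_R(A_{m+n},F)\longrightarrow\Hom_R(A_{m+n+1},F), \]
with maps induced (up to sign) by the differentials of $A$. Its homology at the middle term is precisely the $m$th cohomology of the deleted resolution obtained by applying $\Hom_R(-,F)$ to $\cdots\to A_{n+1}\to A_{n}\to 0$, and by the previous paragraph this equals $\Ext_R^{m}(C_n^A,F)$. Combining with the first reduction gives the asserted identity.

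The step I expect to be the main obstacle is the bookkeeping that relates the homological grading of $\Hom_R(A,F)$ (in Christensen's conventions, with its signs) to the cohomological grading of the $\Ext$-computation, and in particular the endpoint $m=0$. There the incoming differential $\Hom_R(\delta_n^A,F)$ contributes to the homology of the full complex $\Hom_R(A,F)$ in degree $-n$, a contribution that is invisible to $\Ext_R^{0}(C_n^A,F)=\Hom_R(C_n^A,F)$, so the $m=0$ term needs to be treated with care; for $m\geq 1$ the identification is immediate from the acyclic resolution, and this is where the substantive content of the lemma lies.
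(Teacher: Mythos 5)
Your argument is correct on the range $m>0$, which is the range the paper itself intends: its own proof begins ``for $m>0$\dots'' and the lemma is only invoked for $m>0$ (in $(\star)$ of the proof of Theorem \ref{the2.27}), so your caveat about the $m=0$ endpoint is exactly the restriction the authors impose rather than a defect. The overall strategy coincides with the paper's: both proofs use Lemma \ref{lem2.25} to replace $\RHom_{R}(X,F)$ by the honest complex $\Hom_{R}(A,F)$, and both then identify the homology of its tail below degree $-n$ with $\Ext_{R}^{m}(\C_{n}^{A},F)$. The one step you do differently is that identification: the paper observes $A_{n}\sqsupset\,\simeq\Sigma^{n}\C_{n}^{A}$ and applies Lemma \ref{lem2.25} a second time to represent $\RHom_{R}(\C_{n}^{A},F)$ by $\Hom_{R}(\Sigma^{-n}A_{n}\sqsupset,F)$, then chases the shift; you instead note that $\cdots\to A_{n+1}\to A_{n}\to \C_{n}^{A}\to 0$ is a resolution by $\Hom_{R}(-,F)$-acyclic modules (via Lemma \ref{lem2.4}(1)) and dimension-shift. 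Your version is slightly more elementary and self-contained for that step, while the paper's reuse of Lemma \ref{lem2.25} keeps everything phrased uniformly in terms of representing $\RHom$; either way the index bookkeeping comes out the same.
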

\begin{proof}Since $n\geq\sup X=\sup A$ we have $A_{n}\sqsupset~\simeq \Sigma^{n}\C_{n}^{A}$, and since $F$ is flat it follows by Lemma \ref{lem2.25} that $\RHom_{R}(\C_{n}^{A}, F)$ is represented by $\Hom_{R}(\Sigma^{-n}A_{n}\sqsupset~, F)$.
For $m>0$ the isomorphism class $\Ext_{R}^{m}(\C_{n}^{A}, F)$ is then represented by
\[\begin{aligned}
\h_{-m}(\Hom_{R}(\Sigma^{-n}A_{n}\sqsupset, F))&=\h_{-m}(\Sigma^{n}\Hom_{R}(A_{n}\sqsupset, F))\\
&=\h_{-(m+n)}(\Hom_{R}(A_{n}\sqsupset, F))\\
&=\h_{-(m+n)}(\sqsubset_{-n}\Hom_{R}(A, F))\\
&=\h_{-(m+n)}(\Hom_{R}(A, F)).\\
\end{aligned}.\]
It also follows from Lemma \ref{lem2.25} that the complex $\Hom_{R}(A, F)$ represents $\RHom_{R}(X, F)$, so $\Ext_{R}^{m}(\C_{n}^{A}, F)=\h_{-(m+n)}(\RHom_{R}(X, F))$.
\end{proof}

\begin{theorem} \label{the2.27} Let $X\in \C_{(\sqsupset)}(R)$ of finite Ding projective dimension. For $n\in \mathbb{Z}$ the following are equivalent:

$\mathrm{(1)}$ $\Dpd_{R}(X)\leq n$.

$\mathrm{(2)}$ $\inf U-\inf \RHom_{R}(X, U)\leq n$ for all $U\in \C_{(\square)}(R)$ of finite flat dimension with $\h(U)\neq0$.

$\mathrm{(3)}$ $-\inf \RHom_{R}(X, F)\leq n$ for all flat $R$-modules $F$.

$\mathrm{(4)}$ $\sup X\leq n$ and for any bounded below complex $A\simeq X$ of Ding projective modules, the cokernel $\C_{n}^{A}=\Coker(A_{n+1}\rightarrow A_{n})$ is a Ding projective module.

Moreover, the following hold:

\[\begin{aligned}
\Dpd_{R}(X)&=\sup\{\inf U-\inf \RHom_{R}(X, U)~|~\fd_{R}U<\infty \ \text{and} \ \h(U)\neq0\}\\
&=\sup\{-\inf \RHom_{R}(X, F)~|~F \ \text{is flat}\}.\\
\end{aligned}.\]
\end{theorem}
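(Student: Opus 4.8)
The plan is to establish the equivalences $(1)\Leftrightarrow(3)$, $(2)\Leftrightarrow(3)$ and $(1)\Leftrightarrow(4)$ separately, and then to read the two displayed formulas off them. Since $\Dpd_R(X)$ is finite, Proposition~\ref{prop2.22} lets me fix once and for all a bounded below complex $A\simeq X$ of Ding projective modules concentrated in degrees $\le d$, where $d:=\Dpd_R(X)$, so that $\C_d^A=A_d$ is Ding projective; I write $s:=\sup X$ and recall $s\le d$ by Theorem~\ref{the2.16}. The computational engine is Lemma~\ref{lem2.26}: for every $k\ge s$ and every flat module $F$ it reads $\Ext_R^m(\C_k^A,F)=\h_{-(m+k)}(\RHom_R(X,F))$ for $m>0$, and its $m=0$ instance gives $\Hom_R(\C_k^A,F)=\h_{-k}(\RHom_R(X,F))$; this is the dictionary that converts statements about cokernels into statements about $\inf\RHom_R(X,F)$.

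For the upper bound directions I would first prove $(1)\Rightarrow(3)$: given $\Dpd_R(X)\le n$, Theorem~\ref{the2.16} yields a $\DG$-projective resolution $P\to X$ with $s\le n$ and $\C_n(P)$ Ding projective; its soft truncation at $n$ is a bounded below complex of Ding projectives $\simeq X$ whose $n$th cokernel is $\C_n(P)$, so Lemma~\ref{lem2.4}(1) and Lemma~\ref{lem2.26} give $\h_{-(m+n)}(\RHom_R(X,F))=\Ext_R^m(\C_n(P),F)=0$ for all $m>0$ and flat $F$, i.e. $-\inf\RHom_R(X,F)\le n$. For $(4)\Rightarrow(1)$ I soft-truncate the given $A$ at $n$: if $\C_n^A$ is Ding projective and $s\le n$, the truncation is a bounded below complex of Ding projectives $\simeq X$ in degrees $\le n$, so $\Dpd_R(X)\le n$ by Proposition~\ref{prop2.22}. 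For $(1)\Rightarrow(4)$, given an arbitrary bounded below complex $A\simeq X$ of Ding projectives, I would take a surjective $\DG$-projective resolution $P\to A$ (enlarging $P$ by a contractible bounded below projective complex if necessary, which keeps $\C_n(P)$ Ding projective) and consider $0\to K\to P\to A\to0$ with $K$ exact. Since $\DP(R)$ is projectively resolving (Lemma~\ref{lem2.4}(2)) each $K_j$, hence each cokernel of the bounded below exact complex $K$, is Ding projective; a snake-lemma argument (using that $K$ is exact, so $\Z_{n+1}(P)\to\Z_{n+1}(A)$ is onto) produces $0\to\C_n(K)\to\C_n(P)\to\C_n^A\to0$. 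Here $\C_n(K)$ and $\C_n(P)$ are Ding projective, and $(1)\Rightarrow(3)$ together with Lemma~\ref{lem2.26} forces $\Ext_R^1(\C_n^A,F)=0$ for every flat $F$, so Lemma~\ref{lem2.4}(3) makes $\C_n^A$ Ding projective.

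The equivalence $(2)\Leftrightarrow(3)$ is, in the direction $(2)\Rightarrow(3)$, just the specialization $U=\s^0(F)$ with $F$ flat. For $(3)\Rightarrow(2)$ I would represent $U$ by a bounded complex $V$ of flat modules with $\inf\{i\mid V_i\ne0\}=\inf U$, identify $\RHom_R(X,U)\simeq\Hom_R(A,V)$ via Lemma~\ref{lem2.25}, and induct on the number of nonzero terms of $V$, peeling off the lowest module at each stage via the (degreewise split) brutal truncation. The base case is a single flat module, where the required estimate $\inf\Hom_R(A,V)\ge\inf U-n$ is exactly $(3)$ after a shift; the inductive step uses the long exact homology sequence attached to the truncation.

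The crux, and the step I expect to be the main obstacle, is the lower bound $(3)\Rightarrow(1)$: I must manufacture a \emph{single} flat module $F$ that detects the whole dimension, namely $\h_{-d}(\RHom_R(X,F))\ne0$. I would argue by cases on $A$. If $d>s$, then $\C_{d-1}^A$ is \emph{not} Ding projective (otherwise its truncation and Proposition~\ref{prop2.22} would give $\Dpd_R(X)\le d-1$); applied to the exact sequence $0\to A_d\to A_{d-1}\to\C_{d-1}^A\to0$ (left exact because $\h_d(A)=0$), the contrapositive of Lemma~\ref{lem2.4}(3) forces $\Ext_R^1(\C_{d-1}^A,F)\ne0$ for some flat $F$, and Lemma~\ref{lem2.26} with $k=d-1$ rewrites this as $\h_{-d}(\RHom_R(X,F))\ne0$. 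If $d=s$, then $\h_d(A)\ne0$ and $\h_d(A)\hookrightarrow\C_d^A=A_d$, so $A_d$ is a \emph{nonzero} Ding projective module and therefore embeds in the projective module $T_{-1}$ of a totally $\mathcal{F}$-acyclic complex $T$ with $\C_0(T)=A_d$; thus $\Hom_R(A_d,F)\ne0$ for the flat module $F=T_{-1}$, and the $m=0$ instance of Lemma~\ref{lem2.26} identifies this with $\h_{-d}(\RHom_R(X,F))$. Either way $-\inf\RHom_R(X,F)\ge d$, so $(3)$ forces $d\le n$. Finally the two displayed formulas drop out: their $\le$ sides are $(1)\Rightarrow(2)$ and $(1)\Rightarrow(3)$ taken with $n=d$, while their $\ge$ sides are exactly the witness $F$ just constructed, flat modules occurring among the admissible $U$.
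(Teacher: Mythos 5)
Your overall architecture is sound and, despite the reshuffled cycle of implications, it runs on the same engine as the paper's proof (Lemma \ref{lem2.25}, Lemma \ref{lem2.26}, Proposition \ref{prop2.22}, and the resolving properties in Lemma \ref{lem2.4}); your one-step contrapositive argument at level $d-1$ in the case $d>\sup X$ is a legitimate shortcut for the paper's descent to $\C_{s}^{A}$ via Lemma \ref{lem2.7}. However, there is a genuine error at the crux of your lower-bound argument $(3)\Rightarrow(1)$: the ``$m=0$ instance of Lemma \ref{lem2.26}'' that you posit in your opening paragraph, namely $\Hom_R(\C_{k}^{A},F)=\h_{-k}(\RHom_R(X,F))$, is false. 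Writing $\RHom_R(X,F)$ as $\Hom_R(A,F)$, the cycle module in degree $-k$ is indeed $\Hom_R(\C_{k}^{A},F)$ (the maps $A_k\to F$ killing $\B_k(A)$), but $\h_{-k}(\Hom_R(A,F))$ is the \emph{quotient} of this by the boundaries, i.e.\ by the maps of the form $\psi\delta_{k}^{A}$ with $\psi\in\Hom_R(A_{k-1},F)$. The paper's proof of Lemma \ref{lem2.26} explicitly restricts to $m>0$, precisely because the identification $\h_{-(m+n)}(\sqsubset_{-n}\Hom_R(A,F))=\h_{-(m+n)}(\Hom_R(A,F))$ breaks down at $m=0$; a projective resolution $P\to M$ with $k=0$ already gives a counterexample, since $\h_0(\Hom_R(P,F))=\Hom_R(M,F)\neq\Hom_R(P_0,F)$ in general.

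Consequently, in the case $d=\sup X$ your conclusion does not follow from what you actually prove: knowing $\Hom_R(A_d,F)\neq 0$ for $F=T_{-1}$ does not yet give $\h_{-d}(\RHom_R(X,F))\neq0$, because the embedding $\varphi:A_d\hookrightarrow F$ could a priori be a boundary, i.e.\ of the form $\psi\delta_{d}^{A}$. The missing observation --- which is exactly how the paper closes this case --- is that $\delta_{d}^{A}$ is \emph{not injective} (since $\h_d(A)=\Z_d(A)\neq0$ when $d=\sup A$ and $A$ vanishes above degree $d$), whereas $\varphi$ is injective, so $\varphi$ cannot factor as $\psi\delta_{d}^{A}$ and therefore represents a nonzero class in $\h_{-d}(\Hom_R(A,F))$. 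With that one line inserted your argument is complete; without it, the inequality $\Dpd_R(X)\leq\sup\{-\inf\RHom_R(X,F)\mid F\ \text{flat}\}$ is unproved in the boundary case $\Dpd_R(X)=\sup X$. The remaining steps --- the truncation arguments for $(1)\Leftrightarrow(4)$ via Proposition \ref{prop2.22} and the sequence $0\to\C_n(K)\to\C_n(P)\to\C_{n}^{A}\to0$, and the induction for $(3)\Rightarrow(2)$ --- are correct, though the paper obtains $(1)\Rightarrow(2)$ more directly from the degreewise vanishing $\Hom_R(A,F)_l=0$ for $l<\inf U-n$.
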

\begin{proof}The proof of the equivalence of (1)-(4) is cyclic. Obviously (2)$\Rightarrow$ (3). So this leaves us three implications to prove.

(1)$\Rightarrow$ (2) Choose a complex $A\in \C_{\square}(R)$ consisting of Ding projective modules, such that $A\simeq X$ and $A_{l}=0$ for all $l>n$. First let $U$ be a complex of finite flat dimension with $\h(U)\neq 0$. Set $i=\inf U$ and note that $i\in \mathbb{Z}$ as $U\in \C_{(\square)}(R)$ with $\h(U)\neq 0$. Choose a bounded complex $F\simeq U$ of flat modules with $F_{l}\neq 0$ for $l<i$. By Lemma \ref{lem2.25}, the complex $\Hom_{R}(A, F)$ is equivalent to $\RHom_{R}(X, U)$; In particular, $\inf \RHom_{R}(X, U)=\inf \Hom_{R}(A, F)$. For $l<i-n$ and $q\in \mathbb{Z}$, either $q>n$ or $q+l\leq n+l<i$, so the module
\[\Hom_{R}(A, F)_{l}=\prod _{q\in \mathbb{Z}}\Hom_{R}(A_{q}, F_{q+l})=0.\]
Hence, $\h_{l}(\Hom_{R}(A, F))=0$ for $l<i-n$, and $\inf \RHom_{R}(X, U)\geq i-n=\inf U-n$ as desired.

(3)$\Rightarrow$ (4) This part is divided into three steps. First we establish the inequality $n\geq \sup X$, next we prove that the $n$th cokernel in a bounded complex $A\simeq X$ of Ding projective modules is again Ding projective, and finally we give an argument that allows us to conclude the same for $A\in \C_{\sqsupset}(R)$.

To see that $n\geq \sup X$, it is sufficient to show that
\begin{equation}\sup \{-\inf \RHom_{R}(X, F)|~F~ \text{is flat}\}\geq\sup X.\tag{$*$}\end{equation}
By assumption, $g=\Dpd_{R}X$ is finite; That is, $X\simeq A$ for some complex \[A=0\longrightarrow A_{g}\stackrel{\delta_{g}^{A}}\longrightarrow A_{g-1}\longrightarrow\cdots\longrightarrow A_{i}\longrightarrow0,\]
and it is clear $g\geq \sup X$ since $X\simeq A$. For any flat module $F$, the complex $\Hom_{R}(A, F)$ is concentrated in degrees $-i$ to $-g$,
\[\xymatrix{0\ar[r] &\Hom_{R}(A_{i}, F)\ar[r]&\cdots\ar[r]& \Hom_{R}(A_{g-1}, F)\ar[rr]^{\Hom_{R}(\delta_{g}^{A}, F)}&& \Hom_{R}(A_{g}, F)\ar[r]& 0}.\]
By lemma \ref{lem2.25}, $\Hom_{R}(A, F)$ is equivalent to $\RHom_{R}(X, F)$ in $\C(\mathbb{Z})$. First, consider the case $g=\sup X$: The differential $\delta_{g}^{A}: A_{g}\rightarrow A_{g-1}$ is not injective, as $A$ has homology in degree $g=\sup X=\sup A$. By the definition of Ding projective modules, there exists a projective (and so flat) module $F$ and an injective homomorphism $\varphi: A_{g}\rightarrow F$. Because $\delta_{g}^{A}$ is not injective, $\varphi\in \Hom_{R}(A_{g}, F)$ cannot have the form $\Hom_{R}(\delta_{g}^{A}, F)(\psi)=\psi\delta_{g}^{A}$ for some $\psi\in \Hom_{R}(A_{g-1}, F)$. That is, the differential $\Hom_{R}(\delta_{g}^{A}, F)$ is not surjective; Hence $\Hom_{R}(A, F)$ has nonzero homology in degree $-g=-\sup X$, and $(*)$ follows.

Next, assume that $g>\sup X=s$ and consider the exact sequence
\[0\longrightarrow A_{g}\longrightarrow \cdots\longrightarrow A_{s+1}\longrightarrow A_{s}\longrightarrow A_{s}\longrightarrow \C_{s}^{A}\longrightarrow 0.\]
It shows that $\Dpd_{R}\C_{s}^{A}\leq g-s$, and it is easy to check that equality must hold; otherwise, we would have $\Dpd_{R}X<g$ by \ref{prop2.22}. By Lemma \ref{lem2.26}, it follows that for all $m>0$, all $n\geq\sup X$, and all flat modules $F$ one has
\begin{equation}\Ext_{R}^{m}(\C_{n}^{A}, F)=\h_{-(m+n)}(\RHom_{R}(X, F)).\tag{$\star$}
\end{equation}
By Lemma \ref{lem2.7}, we have $\Ext_{R}^{g-s}(\C_{s}^{A}, F)\neq 0$ for some flat $F$, whence $\h_{-g}(\RHom_{R}(X, F))\neq 0$ by $(\star)$, and $(*)$ follows. We conclude that $n\geq\sup X$.

It remains to prove that $\C_{n}^{A}$ is Ding projective for any bounded below complex $A\simeq X$ of Ding projective modules. By assumption, $\Dpd_{R}X$ is finite, so a bounded complex $\widetilde{A}\simeq X$ of Ding projective modules does exist. Consider the cokernel $\C_{n}^{\widetilde{A}}$. Since $n\geq\sup X=\sup \widetilde{A}$, it fits in an exact sequence $0\rightarrow \widetilde{A_{t}}\rightarrow\cdots\rightarrow \widetilde{A_{n+1}}\rightarrow \widetilde{A_{n}}\rightarrow \C_{n}^{\widetilde{A}}\rightarrow 0$, where all the $\widetilde{A_{l}}$'s are Ding projective. By $(\star)$ and Lemma \ref{lem2.4}(3), it now follows that also $\C_{n}^{\widetilde{A}}$ is Ding projective. With this, it is sufficient to prove the following:

If $P, A\in \C_{\sqsupset}(R)$ are complexes of, respectively, projective and Ding projective modules, and $P\simeq X\simeq A$, then the cokernel $\C_{n}^{P}$ is Ding projective if and only if $\C_{n}^{A}$ is so.

Let $A$ and $P$ be two such complexes. As $P$ consists of projective modules, there is a quasi-isomorphism $\pi: P\stackrel{\simeq}\longrightarrow A$, which induces a quasi-isomorphism between the truncated complexes, $\subset_{ n}\pi: \subset_{ n}P\stackrel{\simeq}\longrightarrow \subset_{ n}A$. The mapping cone
\[\Cone(\subset_{ n}\pi)=0\rightarrow \C_{n}^{P}\rightarrow P_{n-1}\oplus \C_{n}^{A}\rightarrow P_{n-2}\oplus A_{n-1}\rightarrow \cdots \]
is a bounded exact complex, in which all modules but the two left-most ones are known to be Ding projective modules. It follows by the resolving properties of the class of Ding projective modules that $\C_{n}^{P}$ is Ding projective if and only if $P_{n-1}\oplus \C_{n}^{A}$ is so, which is equivalent to $\C_{n}^{A}$ being Ding projective.

(4)$\Rightarrow$ (1) Choose a DG-projective resolution $P$ of $X$, by (4) the truncation $\subset_{ n}P$ is a complex of the desired type.

The last claim are immediate consequences of the equivalence of (1), (2) and (3).
\end{proof}

\begin{corollary} Let $X\in \C_{(\sqsupset)}(R)$ of finite Ding projective dimension. Then $\Gpd_{R}(X)=\Dpd_{R}(X)$.
\end{corollary}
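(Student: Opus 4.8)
The plan is to establish the two inequalities $\Gpd_{R}(X)\le\Dpd_{R}(X)$ and $\Dpd_{R}(X)\le\Gpd_{R}(X)$ separately. The first is already available from the corollary establishing $\Gpd_{R}(X)\le\Dpd_{R}(X)\le\pd_{R}(X)$: every complex of Ding projective modules is one of Gorenstein projective modules, so a bounded Ding projective resolution is in particular a Gorenstein projective one. Since $\Dpd_{R}(X)=d<\infty$ by hypothesis, this gives $g:=\Gpd_{R}(X)\le d<\infty$, so both dimensions are finite and only $d\le g$ remains.

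For the reverse inequality I would fix one DG-projective resolution $P\longrightarrow X$ and compare its $g$-th and $d$-th cokernels. By Veliche's characterization of Gorenstein projective dimension \cite{Veliche2006} (the analogue of Theorem~\ref{the2.16}), the equality $\Gpd_{R}(X)=g$ (together with $g\ge\sup X$, which always holds) makes $\C_{g}(P)$ a Gorenstein projective module, while $\Dpd_{R}(X)=d$ and Theorem~\ref{the2.16} make $\C_{d}(P)$ Ding projective. Because $\h_{i}(P)=0$ for $i>g$, the truncation
\[0\longrightarrow \C_{d}(P)\longrightarrow P_{d-1}\longrightarrow\cdots\longrightarrow P_{g}\longrightarrow \C_{g}(P)\longrightarrow 0\]
is exact and is a Ding projective resolution of $\C_{g}(P)$ of length $d-g$, so $\Dpd(\C_{g}(P))\le d-g<\infty$. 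Everything then reduces to the module statement that \emph{a Gorenstein projective module of finite Ding projective dimension is Ding projective}; granting it, $\C_{g}(P)$ is Ding projective, and since $\sup X\le g$ the implication $(2)\Rightarrow(1)$ of Theorem~\ref{the2.16} yields $\Dpd_{R}(X)\le g$, hence $d=g$.

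To prove that module statement I would argue as follows. Write $k=\Dpd(M)$ and take a projective resolution of $M$; by Lemma~\ref{lem2.7} its $k$-th syzygy $K$ is Ding projective, and truncation gives an exact sequence $0\to K\to P_{k-1}\to\cdots\to P_{0}\to M\to 0$, staying exact under $\Hom_{R}(-,Q)$ for projective $Q$, which realises $M$ as a $k$-th cosyzygy of $K$. Since $K$ is Ding projective, Lemma~\ref{lem2.6} furnishes a totally $\mathcal{F}$-acyclic complex $U$ with $\C_{0}(U)=K$; its negative part is a second such cosyzygy sequence $0\to K\to U_{-1}\to\cdots\to U_{-k}\to N\to 0$ whose end $N=\C_{-k}(U)$ is Ding projective by Lemma~\ref{lem2.5}. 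I would then compare these two length-$k$ cosyzygy sequences of $K$ by a Schanuel-type argument: both have projective middle terms and both remain exact under $\Hom_{R}(-,Q)$ for projective $Q$, and the needed comparison morphism can be built because the intermediate cosyzygies of $K$ are Gorenstein projective and hence have vanishing $\Ext^{1}$ against projectives. This produces a homotopy equivalence and so an isomorphism $M\oplus(\text{projective})\cong N\oplus(\text{projective})$; as $N$ and projectives are Ding projective and $\DP(R)$ is closed under summands (Lemma~\ref{lem2.4}(2)), $M$ is Ding projective.

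The main obstacle is exactly this module statement, and within it the cosyzygy comparison. The tempting route --- imitating Holm's proof that a Gorenstein projective module of finite projective dimension is projective --- would peel off syzygies to reduce to $\Dpd(M)\le 1$, i.e. to a sequence $0\to K\to P_{0}\to M\to 0$ with $K$ Ding projective, and then invoke Lemma~\ref{lem2.4}(3); but that lemma needs $\Ext_{R}^{1}(M,F)=0$ for all flat $F$, and Gorenstein projectivity supplies $\Ext$-vanishing only against modules of finite \emph{projective} dimension, not against flat modules. Thus this $\Ext^{1}$ need not vanish and the sequence need not split, which is precisely why one must route through the cosyzygies of the Ding projective syzygy $K$ rather than through the syzygies of $M$; the homotopy/Schanuel comparison is the technical heart.
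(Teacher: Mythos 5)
Your argument is correct, but it takes a genuinely different route from the paper. The paper's proof is a one-line citation: it combines the functorial description of Theorem~\ref{the2.27}, namely $\Dpd_{R}(X)=\sup\{-\inf \RHom_{R}(X,F)\mid F \ \text{flat}\}$, with the parallel description of $\Gpd_{R}(X)$ from \cite[Theorem 3.1]{Christensen2006}, and reads the equality off by comparing the two suprema. You instead argue structurally: after the easy inequality $\Gpd_{R}(X)\le\Dpd_{R}(X)$, you compare the cokernels $\C_{g}(P)$ and $\C_{d}(P)$ of one DG-projective resolution and reduce everything to the module-level statement that a Gorenstein projective module of finite Ding projective dimension is Ding projective, which you prove by a Schanuel-type comparison of two $\Hom_{R}(-,\mathrm{proj})$-exact coresolutions of the Ding projective syzygy $K$, concluding that $M$ is a direct summand of a Ding projective module and invoking Lemma~\ref{lem2.4}(2). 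This is more work, but it is self-contained, it isolates a module lemma of independent interest, and it confronts head-on a point the paper's citation glosses over: since flat modules need not have finite projective dimension, the inequality $\Dpd_{R}(X)\le\Gpd_{R}(X)$ does not simply fall out of comparing the supremum over flat test modules with the supremum over projective ones; one needs precisely the fact that the relevant Gorenstein projective cokernel, having finite Ding projective dimension, has vanishing $\Ext^{>0}$ against flats --- i.e.\ your module lemma in some form. Your diagnosis of the obstacle (Gorenstein projectivity only gives $\Ext$-vanishing against modules of finite projective dimension, so Lemma~\ref{lem2.4}(3) cannot be applied directly) is exactly right, and the detour through the cosyzygies of $K$ inside a totally $\mathcal{F}$-acyclic complex is the correct fix; all the lifting and homotopy obstructions you need are of the form $\Ext^{1}_{R}(G,Q)$ with $G$ Gorenstein projective and $Q$ projective, which vanish. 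The only imprecision is cosmetic: the comparison directly yields that the composite $M\to N\to M$ differs from $\id^{M}$ by a map factoring through the projective $P_{0}$, hence that $M$ is a summand of $N\oplus P_{0}$, which is all you need; the full isomorphism $M\oplus(\text{projective})\cong N\oplus(\text{projective})$ is not required.
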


\begin{proof} It follows from Theorem \ref{the2.27} and \cite[Theorem 3.1]{Christensen2006}
\end{proof}

\begin{lemma}\label{lem2.27} Let $\varphi: R\rightarrow S$ be a homomorphism of rings.

$\mathrm{(1)}$ If $M$ is a Ding projective $S$-module, then $\Hom_{R}(\widetilde{P}, M)$ is a Ding projective $S$-module for every finite projective $R$-module $\widetilde{P}$.

$\mathrm{(2)}$ If $M$ is a Ding projective $S$-module, then $ \widetilde{P}\otimes_{R} M$ is a Ding projective $S$-module for every projective $R$-module $\widetilde{P}$.
\end{lemma}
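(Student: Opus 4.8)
The plan is to use the characterization of Ding projectivity by totally $\mathcal{F}$-acyclic complexes together with the closure properties of such complexes. Since $M$ is a Ding projective $S$-module, fix a totally $\mathcal{F}$-acyclic complex $T$ of projective $S$-modules with $\C_{0}(T)=M$. For (1) I would apply the functor $\Hom_{R}(\widetilde{P},-)$ to $T$, and for (2) the functor $\widetilde{P}\otimes_{R}-$; in each case the resulting complex carries its natural $S$-module structure coming from that of $T$. It then suffices to show that these complexes are again totally $\mathcal{F}$-acyclic over $S$ and to identify their zeroth cokernels as $\Hom_{R}(\widetilde{P},M)$ and $\widetilde{P}\otimes_{R}M$ respectively; the latter is automatic because $\Hom_{R}(\widetilde{P},-)$ is exact and $\widetilde{P}\otimes_{R}-$ is right exact, so each commutes with the cokernel $\C_{0}$.

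The unifying idea is to realize the new complex as a direct summand of a direct sum of copies of $T$. For (1), finiteness of $\widetilde{P}$ lets me write $\widetilde{P}\oplus\widetilde{P}'\cong R^{k}$, whence $\Hom_{R}(\widetilde{P},T)$ is a direct summand, as a complex of $S$-modules, of $\Hom_{R}(R^{k},T)\cong T^{k}$. For (2), projectivity of $\widetilde{P}$ gives $\widetilde{P}\oplus\widetilde{P}'\cong R^{(I)}$ for some set $I$, so $\widetilde{P}\otimes_{R}T$ is a direct summand of $R^{(I)}\otimes_{R}T\cong T^{(I)}$. Thus in both cases I am reduced to checking that a (finite, resp. arbitrary) direct sum of copies of $T$ is totally $\mathcal{F}$-acyclic, and that the class of totally $\mathcal{F}$-acyclic complexes is closed under direct summands.

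Verifying the three defining conditions for $T^{k}$ and $T^{(I)}$ is where the two parts genuinely diverge, and this is the point to watch. Condition (2), exactness, and the identification of summands pose no difficulty, since exact complexes are closed under arbitrary direct sums and summands. For condition (3), one has $\Hom_{S}(T^{k},F)\cong\Hom_{S}(T,F)^{k}$ and $\Hom_{S}(T^{(I)},F)\cong\prod_{I}\Hom_{S}(T,F)$ for a flat $S$-module $F$; both are exact, the first as a finite sum and the second because products are exact in the module category, and a summand of an exact complex is exact. The crux is condition (1): a finite direct sum of the projective $S$-modules $T_{n}$ is projective, but an infinite \emph{product} need not be. This is exactly why part (1) requires $\widetilde{P}$ to be finite---so that $\Hom_{R}(\widetilde{P},T)$ sits inside the finite direct sum $T^{k}$ rather than an infinite product---whereas in part (2) the functor $\widetilde{P}\otimes_{R}-$ produces the genuine direct sum $T^{(I)}$, whose terms are projective for arbitrary $I$. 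With condition (1) secured in each case, $\Hom_{R}(\widetilde{P},T)$ and $\widetilde{P}\otimes_{R}T$ are totally $\mathcal{F}$-acyclic, and the computation of $\C_{0}$ finishes the proof.
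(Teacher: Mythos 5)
Your proof is correct, but it takes a genuinely different route from the paper's. The paper applies $\Hom_{S}(-,Q)$ directly to $\Hom_{R}(\widetilde{P},T)$ and to $\widetilde{P}\otimes_{R}T$ and invokes the swap and adjunction isomorphisms $\Hom_{S}(\Hom_{R}(\widetilde{P},T),Q)\cong\widetilde{P}\otimes_{R}\Hom_{S}(T,Q)$ (valid precisely because $\widetilde{P}$ is finite projective) and $\Hom_{S}(\widetilde{P}\otimes_{R}T,Q)\cong\Hom_{R}(\widetilde{P},\Hom_{S}(T,Q))$, reading off condition (3) of total $\mathcal{F}$-acyclicity from the exactness of $\Hom_{S}(T,Q)$. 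You instead split $\widetilde{P}$ off a free module and realize $\Hom_{R}(\widetilde{P},T)$ (resp.\ $\widetilde{P}\otimes_{R}T$) as an $S$-complex direct summand of $T^{k}$ (resp.\ $T^{(I)}$), then verify that totally $\mathcal{F}$-acyclic complexes are closed under finite (resp.\ arbitrary) direct sums and under direct summands; all of those verifications, including the use of exactness of products for $\Hom_{S}(T^{(I)},F)\cong\prod_{I}\Hom_{S}(T,F)$ and the identification of $\C_{0}$ via right exactness of the two functors, are sound. Your approach is more elementary, avoiding the Hom-tensor identities entirely, and it makes the role of the finiteness hypothesis in (1) especially transparent (finite sums of projectives versus infinite products); the paper's approach is shorter. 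Note also that your argument admits a further shortcut: once $\Hom_{R}(\widetilde{P},M)$ is exhibited as an $S$-module direct summand of $M^{k}$ and $\widetilde{P}\otimes_{R}M$ as a summand of $M^{(I)}$, Lemma \ref{lem2.4}(2) (closure of the class of Ding projective modules under direct sums and direct summands, applied over $S$) finishes the proof without returning to the level of complexes.
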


\begin{proof}(1) Let $T$ be a totally $\mathcal{F}$-acyclic complex of $M$. Then the complex $\Hom_{R}(\widetilde{P}, T)$ of projective $S$-modules is exact. For any flat $S$-module $Q$ we have
\[\Hom_{S}(\Hom_{R}(\widetilde{P}, T), Q))\cong \widetilde{P}\otimes_{R}\Hom_{S}(T, Q).\]
Since $\Hom_{S}(T, Q)$ is exact, and $\widetilde{P}$ is finite projective, we obtain that $\Hom_{S}(\Hom_{R}(\widetilde{P}, T), Q))$ is exact, and so $\Hom_{R}(\widetilde{P}, T)$ is a totally $\mathcal{F}$-acyclic complex of $\Hom_{R}(\widetilde{P}, M)$.

(2) Let $T$ be a totally $\mathcal{F}$-acyclic complex of $M$. Then the complex $\widetilde{P}\otimes_{R}T$ of projective $S$-modules is exact. For any flat $S$-module $Q$ we have the exactness of
\[\Hom_{S}(\widetilde{P}\otimes_{R}T, Q))\cong \Hom_{R}(\widetilde{P}, \Hom_{S}(T, Q))\]
as $\Hom_{S}(T, Q)$ is exact. Hence $\widetilde{P}\otimes_{R}T$ is a totally $\mathcal{F}$-acyclic complex of $\widetilde{P}\otimes_{R} M$.
\end{proof}

\begin{theorem}\label{the2.28} Let $\varphi: R\rightarrow S$ be a homomorphism of rings, $X\in \C_{(\sqsupset)}(S)$.

$\mathrm{(1)}$ If $U\in \p^{(f)}(R)$, then $\Dpd_{S}(\RHom_{R}(U, X))\leq \Dpd_{S}(X)-\inf U$.

$\mathrm{(2)}$ If $U\in \p(R)$, then $\Dpd_{S}(U\otimes_{R}^{L}X)\leq \Dpd_{S}(X)+\pd_{R}(U)$.
\end{theorem}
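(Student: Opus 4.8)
The plan is to prove both inequalities by the same mechanism: exhibit an explicit bounded complex of Ding projective $S$-modules representing the complex in question, and then read off the bound from the functorial description in Proposition~\ref{prop2.22}. First I would dispose of the degenerate cases. If $\Dpd_S(X)=\infty$ the inequalities are vacuous (note that $\inf U$ and $\pd_R(U)$ are finite whenever $U$ is a nonzero object of $\C_{(\sqsupset)}(R)$), and if $X$ is exact then $\RHom_R(U,X)$ and $U\otimes_{R}^{L}X$ are exact, so both sides equal $-\infty$. Thus I may assume $g:=\Dpd_S(X)$ is finite. By Proposition~\ref{prop2.22} there is a bounded below complex $A\simeq X$ of Ding projective $S$-modules with $A_l=0$ for all $l>g$; since $A$ is also bounded below, it is in fact a bounded complex concentrated in degrees $[\inf X,g]$.

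For part (1), using that $U\in\p^{(f)}(R)$ is homologically bounded below of finite projective dimension, I would choose a bounded complex $\widetilde P\simeq U$ of finitely generated projective $R$-modules with $\widetilde P_q=0$ for $q<\inf U$ (such a representative exists by truncating a DG-projective resolution of $U$ that starts in degree $\inf U$). Because $\widetilde P$ is a bounded complex of projectives, $\RHom_R(U,X)$ is represented by $\Hom_R(\widetilde P,A)$, whose degree-$n$ term is the finite direct sum $\bigoplus_{q}\Hom_R(\widetilde P_q,A_{q+n})$. Each summand is a Ding projective $S$-module by Lemma~\ref{lem2.27}(1), and finite direct sums of Ding projectives are Ding projective by Lemma~\ref{lem2.4}(2), so $\Hom_R(\widetilde P,A)$ is a bounded complex of Ding projective $S$-modules. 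A summand can be nonzero only when $\widetilde P_q\neq 0$ and $A_{q+n}\neq 0$, forcing $q\ge\inf U$ and $q+n\le g$, hence $n\le g-\inf U$. Thus $\Hom_R(\widetilde P,A)\simeq\RHom_R(U,X)$ is a bounded below complex of Ding projective modules with top degree at most $g-\inf U$, and Proposition~\ref{prop2.22} yields $\Dpd_S(\RHom_R(U,X))\le\Dpd_S(X)-\inf U$.

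Part (2) runs in parallel, replacing $\Hom$ by $\otimes$. Here $U\in\p(R)$, so I would pick a bounded complex $\widetilde P\simeq U$ of (not necessarily finitely generated) projective $R$-modules with $\widetilde P_q=0$ for $q>\pd_R(U)$. Since $\widetilde P$ consists of flat modules, $U\otimes_{R}^{L}X$ is represented by $\widetilde P\otimes_R A$, whose degree-$n$ term is the finite direct sum $\bigoplus_{p+q=n}\widetilde P_p\otimes_R A_q$; each summand is Ding projective over $S$ by Lemma~\ref{lem2.27}(2), so $\widetilde P\otimes_R A$ is again a bounded complex of Ding projective $S$-modules. A summand is nonzero only when $p\le\pd_R(U)$ and $q\le g$, so $n=p+q\le \pd_R(U)+g$, and Proposition~\ref{prop2.22} gives $\Dpd_S(U\otimes_{R}^{L}X)\le\Dpd_S(X)+\pd_R(U)$. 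The main thing to get right is the bookkeeping of degrees together with the correct choice of the representative $\widetilde P$: in particular, for part (1) one must start $\widetilde P$ exactly in degree $\inf U$, since a representative extending further down would only give the weaker bound $g-\inf\{q:\widetilde P_q\neq 0\}$, and at each step one must check that the constructed complexes lie in $\C_{(\sqsupset)}(S)$ so that Proposition~\ref{prop2.22} applies. The quasi-isomorphisms $\RHom_R(U,X)\simeq\Hom_R(\widetilde P,A)$ and $U\otimes_{R}^{L}X\simeq\widetilde P\otimes_R A$ are routine, as $\widetilde P$ is a bounded complex of projective (hence flat) modules.
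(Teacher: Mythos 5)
Your proposal is correct and follows essentially the same route as the paper: replace $X$ by a bounded complex $A$ of Ding projective $S$-modules with top degree $\Dpd_S(X)$, replace $U$ by a suitable bounded complex of projectives, and apply Lemma~\ref{lem2.27} together with closure under finite direct sums to see that $\Hom_R(\widetilde P,A)$ (resp.\ $\widetilde P\otimes_R A$) is a bounded complex of Ding projective $S$-modules whose top degree gives the stated bound. The degree bookkeeping matches the paper's, so no further comment is needed.
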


\begin{proof} (1) We can assume that $U$ is not exact, otherwise the inequality is trivial; and we set $i=\inf U$, $\pd_{R}(U)=n$. The inequality is also trivial if $X$ is exact or not of finite Ding projective dimension, so we assume that $X$ is not exact and set $\Dpd_{R}(X)=g$. We can now choose a complex $A\in \C_{\square}(S)$ of Ding projective modules which is equivalent to $X$ and has $A_{l}=0$ for $l>g$; we set $v=\inf\{l\in\mathbb{Z}~|~A_{l}\neq0\}$. Since $U\in \p^{(f)}(R)$, $U$ is equivalent to a complex $P$ of finite projective modules concentrated in degrees $n$, $\cdots$, $i$. Now $\RHom_{R}(U, X)$ is represented by the complex $\Hom_{R}(P, A)$ with
\[\Hom_{R}(P, A)_{l}=\prod_{q\in\mathbb{Z}}\Hom_{R}(P_{q}, A_{q+l})=\bigoplus_{q=i}^{n}\Hom_{R}(P_{q}, A_{q+l}).\]
The modules $\Hom_{R}(P_{q}, A_{q+l})$ are Ding projective by Lemma \ref{lem2.27}(1), and finite direct sums of Ding projective modules are Ding projective. So $\Hom_{R}(P, A)$ is a complex of Ding projective modules. In addition, we have $\Hom_{R}(P, A)_{l}=0$ for $l<v-n$; and if $l>g-i$, then $l+q>g-i+q\geq g$, and so $\Hom_{R}(P, A)_{l}=0$. It follows that $\Hom_{R}(P, A)$ is bounded. That is, $\Hom_{R}(P, A)$ is a bounded complex of Ding projective modules concentrated in degrees at most $g-i$. Hence $\Dpd_{S}(\RHom_{R}(U, X))\leq g-i= \Dpd_{S}(X)-\inf U$.

(2) We can assume that $U$ is not exact, otherwise the inequality is trivial; and we set $i=\inf U$, $\pd_{R}(U)=n$. The inequality is also trivial if $X$ is exact or not of finite Ding projective dimension, so we assume that $X$ is not exact and set $\Dpd_{R}(X)=g$. We can now choose a complex $A\in \C_{\square}(S)$ of Ding projective modules which is equivalent to $X$ and has $A_{l}=0$ for $l>g$; we set $v=\inf\{l\in\mathbb{Z}~|~A_{l}\neq0\}$. Since $U\in \p(R)$, $U$ is equivalent to a complex $P$ of projective modules concentrated in degrees $n$, $\cdots$, $i$. Now $U\otimes_{R}^{L}X$ is represented by the complex $P\otimes_{R}A$ with
\[(P\otimes_{R}A)_{l}=\bigoplus_{q\in\mathbb{Z}}(P_{q}\otimes A_{l-q})=\bigoplus_{q=i}^{n}(P_{q}\otimes A_{l-q}).\]
The modules $P_{q}\otimes A_{l-q}$ are Ding projective by Lemma \ref{lem2.27}(2), and finite direct sums of Ding projective modules are Ding projective. So $P\otimes_{R}A$ is a complex of Ding projective modules. In addition, we have $(P\otimes_{R}A)_{l}=0$ for $l<v+i$; and if $l>g+n$, then $l-q>g+n-q\geq g$, and so $(P\otimes_{R}A)_{l}=0$. It follows that $P\otimes_{R}A$ is bounded. That is, $P\otimes_{R}A$ is a bounded complex of Ding projective modules concentrated in degrees at most $g+n$. Hence $\Dpd_{S}(U\otimes_{R}^{L}X)\leq g+n= \Dpd_{S}(X)+\pd_{R}(U)$.
\end{proof}

 Let $R$ be a subring of the ring $S$, and assume that $R$ and
$S$ have the same unity 1. The ring $S$ is called an excellent extension of
$R$ if

(A) $S$ is a free normalizing extension of $R$ with a basis that includes 1;
that is, there is a finite subset $\{a_{1}, \cdots, a_{n}\}$ of $S$ such that $a_{1}=1$, $S=\sum_{i=1}^{n}a_{i}R$ and $a_{i}R = Ra_{i}$ for all $i=1,\cdots, n$ and $S$ is free with basis
$\{a_{1}, \cdots, a_{n}\}$ both as a right and left $R$-module, and

(B) $S$ is $R$-projective; that is, if $_{S}N$ is a submodule of $_{S}M$, then $_{R}N~|~_{R}M$ implies $_{S}N~|~_{S}M$.

Excellent extensions were introduced by Passman \cite{Passman77}. Examples include $n\times n$ matrix rings, and crossed products $R*G$ where $G$ is a finite group with $|G|^{-1}\in R$ \cite{Passman83}.
\begin{lemma}\label{lem2.28} Let $S$ be an excellent extension of $R$.

$\mathrm{(1)}$  If $N$ is a Ding projective $R$-module, then $\Hom_{R}(\widehat{P}, N)$ is a Ding projective $S$-module for every finite projective $S$-module $\widehat{P}$.

$\mathrm{(2)}$ If $N$ is a Ding projective $R$-module, then $ \widehat{P}\otimes_{R} N$ is a Ding projective $S$-module for every projective $S$-module $\widehat{P}$.
\end{lemma}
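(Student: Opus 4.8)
The plan is to mimic the proof of Lemma \ref{lem2.27}: starting from a Ding projective $R$-module $N$, fix a totally $\mathcal{F}$-acyclic complex $T$ of $R$-modules with $\C_{0}(T)=N$, transport it along the relevant functor, and check that the resulting complex of $S$-modules is again totally $\mathcal{F}$-acyclic with the correct $0$th cokernel. Since the class $\DP(S)$ is closed under direct sums and direct summands by Lemma \ref{lem2.4}(2), it suffices to treat the free case $\widehat{P}=S$: a finite projective $S$-module is a summand of some $S^{m}$ and an arbitrary projective $S$-module is a summand of a free module $S^{(\Lambda)}$, so once the free case is settled the general cases follow by splitting off summands (in (1) the finiteness of $\widehat{P}$ is used precisely so that $\Hom_{R}(-,N)$ turns the finite free module into a \emph{finite} direct sum). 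Thus the whole statement reduces to showing that the induced module $S\otimes_{R}N$ and the coinduced module $\Hom_{R}(S,N)$ are Ding projective over $S$.

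The central computation is the base case for (2). Applying $S\otimes_{R}-$ to $T$, I would verify the three defining conditions for $S\otimes_{R}T$. Because $S$ is free, hence flat, as a right $R$-module, the functor $S\otimes_{R}-$ is exact, so $S\otimes_{R}T$ is an exact complex and $\C_{0}(S\otimes_{R}T)\cong S\otimes_{R}N$. Each $S\otimes_{R}T_{i}$ is a projective $S$-module, being the induction of the projective $R$-module $T_{i}$. For the third condition, let $Q$ be a flat $S$-module; the adjunction isomorphism $\Hom_{S}(S\otimes_{R}T,Q)\cong\Hom_{R}(T,Q)$ identifies the complex in question with $\Hom_{R}(T,Q)$, where $Q$ is restricted to $R$. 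The key point is that a flat $S$-module is automatically flat over $R$ (flatness is transitive and $S$ is $R$-flat), so $\Hom_{R}(T,Q)$ is exact because $T$ is totally $\mathcal{F}$-acyclic over $R$. Hence $S\otimes_{R}T$ is totally $\mathcal{F}$-acyclic and $S\otimes_{R}N\in\DP(S)$.

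With the base case in hand, both parts follow. For (2), writing a projective $\widehat{P}$ as a summand of $S^{(\Lambda)}$ gives $\widehat{P}\otimes_{R}N$ as a summand of $(S\otimes_{R}N)^{(\Lambda)}$, which lies in $\DP(S)$ by closure under arbitrary direct sums and summands (Lemma \ref{lem2.4}(2)). For (1), I would invoke the excellent-extension isomorphism $\Hom_{R}(S,-)\cong S\otimes_{R}-$ of functors into $S$-modules, valid because $S$ is finitely generated free as both a left and a right $R$-module; this immediately yields $\Hom_{R}(S,N)\cong S\otimes_{R}N\in\DP(S)$. A finite projective $\widehat{P}$ is a summand of $S^{m}$, so $\Hom_{R}(\widehat{P},N)$ is a summand of $\Hom_{R}(S,N)^{m}\cong(S\otimes_{R}N)^{m}$, and closure under finite direct sums and summands finishes the proof. (Alternatively one can run the argument of the second paragraph directly with $\Hom_{R}(S,-)$ in place of $S\otimes_{R}-$, using that $S$ is finitely generated projective as a left $R$-module so that $\Hom_{R}(S,-)$ is exact and preserves projectivity.)

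The main obstacle, exactly as in Lemma \ref{lem2.27}, is the third totally-$\mathcal{F}$-acyclicity condition: the exactness of $\Hom_{S}(-,Q)$ applied to the transported complex for every flat $S$-module $Q$. Everything hinges on two features of an excellent extension: that $S$ is finitely generated free, hence projective and flat, as a module on both sides, which makes the functors preserve exactness and projectivity and makes induction and coinduction naturally isomorphic; and that flatness descends along restriction, which is what lets the totally $\mathcal{F}$-acyclicity of $T$ over $R$ be exploited after the adjunction. The only genuine bookkeeping to be careful about is the compatibility of the left and right module structures carried by $\Hom_{R}(S,N)$ and $\widehat{P}\otimes_{R}N$ with the naturality of the isomorphism $\Hom_{R}(S,-)\cong S\otimes_{R}-$.
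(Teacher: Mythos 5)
Your core mechanism is the same as the paper's: transport a totally $\mathcal{F}$-acyclic complex $T$ with $\C_{0}(T)=N$ along the induction/coinduction functor, and verify the three conditions using the adjunction isomorphism together with the fact that a flat $S$-module restricts to a flat $R$-module (since $S$ is free, hence flat, over $R$). The paper applies $\Hom_{R}(\widehat{P},-)$ and $\widehat{P}\otimes_{R}-$ to $T$ for a general (finite) projective $\widehat{P}$ and uses the swap isomorphism $\Hom_{S}(\Hom_{R}(\widehat{P},T),Q)\cong\widehat{P}\otimes_{R}\Hom_{R}(T,Q)$, whereas you first reduce to the free case $\widehat{P}=S$ and then invoke closure of $\DP(S)$ under (finite or arbitrary) direct sums and summands from Lemma \ref{lem2.4}(2); both reductions are legitimate and yours is arguably cleaner for part (2).

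The one step I would not accept without justification is your primary route for part (1): the asserted natural isomorphism $\Hom_{R}(S,-)\cong S\otimes_{R}-$ of functors into $S$-modules. That is a Frobenius-extension property; it is not among the axioms of an excellent extension, and ``finitely generated free on both sides with a common normalizing basis'' does not by itself produce the required $(S,R)$-bimodule isomorphism $\Hom_{R}(S,R)\cong S$. Your parenthetical fallback --- running the coinduction argument directly with $\Hom_{R}(S,-)$, which is exact because ${}_{R}S$ is finite free --- is the right fix and is what the paper does; note only that to see $\Hom_{R}(S,T_{i})$ is $S$-projective one should invoke the standard excellent-extension fact (resting on condition (B)) that an $S$-module which is projective over $R$ is projective over $S$, a point the paper itself also glosses over. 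With that substitution your argument is complete and matches the paper's.
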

\begin{proof}(1) Let $T$ be a totally $\mathcal{F}$-acyclic complex of $N$. Then the complex $\Hom_{R}(\widehat{P}, T)$ consists of projective $S$-modules, and it is exact as $\widehat{P}$ is a finite projective $R$-module. For any flat $S$-module $Q$ we have
\[\Hom_{S}(\Hom_{R}(\widehat{P}, T), Q))\cong \widehat{P}\otimes_{R}\Hom_{S}(T, Q),\]
which is exact as $Q$ is a flat $R$-module. Hence $\Hom_{R}(\widehat{P}, T)$ is a totally $\mathcal{F}$-acyclic complex of $\Hom_{R}(\widehat{P}, N)$.

(2) Let $T$ be a totally $\mathcal{F}$-acyclic complex of $N$. Then the complex $\widehat{P}\otimes_{R}T$ consists of projective $S$-modules, and it is exact as $\widehat{P}$ is a projective $R$-module. For any flat $S$-module $Q$, we have
\[\Hom_{S}(\widetilde{P}\otimes_{R}T, Q))\cong \Hom_{R}(\widetilde{P}, \Hom_{S}(T, Q)),\]
which is exact as $Q$ is a flat $R$-module. Hence $\widehat{P}\otimes_{R}T$ is a totally $\mathcal{F}$-acyclic complex of $\widehat{P}\otimes_{R} N$.
\end{proof}
\begin{theorem}Let $S$ be an excellent extension of $R$, $X\in \C_{(\sqsupset)}(R)$.

$\mathrm{(1)}$ If $V\in \p^{(f)}(S)$, then $\Dpd_{S}(\RHom_{R}(V, X))\leq \Dpd_{R}(X)-\inf V$.

$\mathrm{(2)}$ If $V\in \p(S)$, then $\Dpd_{S}(V\otimes_{R}^{L}X)\leq \Dpd_{R}(X)+\pd_{S}(V)$.
\end{theorem}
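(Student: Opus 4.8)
The plan is to mirror the proof of Theorem~\ref{the2.28} almost verbatim, replacing the module-transfer Lemma~\ref{lem2.27} (valid for an arbitrary ring map) by its excellent-extension counterpart Lemma~\ref{lem2.28}. In each part I would first clear the trivial cases: if $V$ is exact then the relevant derived functor is exact, so its Ding projective dimension is $-\infty$ and the bound is trivial; and if $X$ is exact or $\Dpd_R(X)=\infty$ there is nothing to prove. So assume $V$ is not exact, put $i=\inf V$ and $n=\pd_S(V)$, and assume $g=\Dpd_R(X)<\infty$. By Proposition~\ref{prop2.22} I may then choose a bounded complex $A\in\C_{\square}(R)$ of Ding projective $R$-modules with $A\simeq X$ and $A_l=0$ for $l>g$; set $v=\inf\{l\mid A_l\neq0\}$.

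For part (1), since $V\in\p^{(f)}(S)$ I represent $V$ by a complex $P$ of finite projective $S$-modules concentrated in degrees $n,\dots,i$, so that the $S$-complex $\RHom_R(V,X)$ is computed by $\Hom_R(P,A)$ with $\Hom_R(P,A)_l=\bigoplus_{q=i}^{n}\Hom_R(P_q,A_{q+l})$. The decisive point is that each summand $\Hom_R(P_q,A_{q+l})$ is a Ding projective $S$-module: this is exactly Lemma~\ref{lem2.28}(1), applied to the finite projective $S$-module $P_q$ and the Ding projective $R$-module $A_{q+l}$. Since finite direct sums of Ding projective modules are Ding projective by Lemma~\ref{lem2.4}(2), $\Hom_R(P,A)$ is a complex of Ding projective $S$-modules. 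A routine degree count shows that this term vanishes for $l<v-n$ and, because $q\geq i$ forces $q+l>g$ (hence $A_{q+l}=0$) as soon as $l>g-i$, it also vanishes for $l>g-i$; thus $\Hom_R(P,A)$ is bounded and concentrated in degrees at most $g-i$. Proposition~\ref{prop2.22} then gives $\Dpd_S(\RHom_R(V,X))\leq g-i=\Dpd_R(X)-\inf V$.

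Part (2) is the same computation with $\otimes$ in place of $\Hom$. Representing $V\in\p(S)$ by a complex $P$ of projective $S$-modules in degrees $n,\dots,i$, the derived tensor $V\otimes_R^L X$ is computed by $P\otimes_R A$ with $(P\otimes_R A)_l=\bigoplus_{q=i}^{n}(P_q\otimes_R A_{l-q})$; now Lemma~\ref{lem2.28}(2) makes each $P_q\otimes_R A_{l-q}$ a Ding projective $S$-module, so $P\otimes_R A$ is again a complex of Ding projective $S$-modules. The analogous bookkeeping yields vanishing for $l<v+i$ and for $l>g+n$, so $P\otimes_R A$ is concentrated in degrees at most $g+n$, and Proposition~\ref{prop2.22} gives $\Dpd_S(V\otimes_R^L X)\leq g+n=\Dpd_R(X)+\pd_S(V)$.

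The whole argument is degree bookkeeping once the two closure properties are in hand, so I expect no serious obstacle at the level of the theorem itself; the real content — and the only place the excellent-extension hypothesis enters — is concentrated in Lemma~\ref{lem2.28}. There the point is that an excellent extension $S/R$ restricts projective $S$-modules to projective $R$-modules and flat $S$-modules to flat $R$-modules, which is what allows the adjunction isomorphisms $\Hom_S(\Hom_R(\widehat P,T),Q)\cong\widehat P\otimes_R\Hom_S(T,Q)$ and $\Hom_S(\widehat P\otimes_R T,Q)\cong\Hom_R(\widehat P,\Hom_S(T,Q))$ to carry the exactness of $\Hom_S(T,Q)$ (for flat $S$-modules $Q$) through the transfer functors. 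Granting Lemma~\ref{lem2.28}, the transfer theorem above is a direct transcription of Theorem~\ref{the2.28}.
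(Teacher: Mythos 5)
Your proposal is correct and is exactly the paper's intended argument: the authors prove this theorem by stating that it ``follows by analogy with the proof of Theorem~\ref{the2.28}, only this time use Lemma~\ref{lem2.28},'' which is precisely the substitution you carry out. Your degree bookkeeping and the application of Lemma~\ref{lem2.28} to the summands $\Hom_{R}(P_{q},A_{q+l})$ and $P_{q}\otimes_{R}A_{l-q}$ match the template of Theorem~\ref{the2.28} faithfully.
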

\begin{proof} They follow by analogy with the proof of Theorem \ref{the2.28}, only this time use Lemma \ref{lem2.28}.
\end{proof}

\begin{corollary}Let $S$ be an excellent extension of $R$, $X\in \C_{(\sqsupset)}(R)$. Then

$\mathrm{(1)}$ $\Dpd_{S}(\RHom_{R}(S, X))\leq \Dpd_{R}(X)$.

$\mathrm{(2)}$ $\Dpd_{S}(S\otimes_{R}^{L}X)\leq \Dpd_{R}(X)$.
\end{corollary}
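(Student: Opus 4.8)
The plan is to read off both inequalities as the special case $V=S$ of the preceding theorem, so the whole argument rests on identifying the homological position of $S$ as a module over itself. First I would record the elementary observation that, regarded as a left module over itself, $S$ is free of rank $1$ with basis $\{1\}$; in particular $S$ is a finitely generated projective $S$-module. Consequently the stalk complex $\s^{0}(S)$, which has $S$ in degree $0$ and zero elsewhere, is a bounded complex of finite projective $S$-modules. This means $\s^{0}(S)\in \p^{(f)}(S)$ with $\inf \s^{0}(S)=0$, and equally $\s^{0}(S)\in \p(S)$ with $\pd_{S}(\s^{0}(S))=0$, since a bounded complex of projectives is already $\DG$-projective.

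For part (1), I would apply part (1) of the theorem with $V=\s^{0}(S)$. Noting that $\RHom_{R}(S,X)$ is by convention $\RHom_{R}(\s^{0}(S),X)$ with its $S$-structure inherited from $S$, the theorem yields
\[\Dpd_{S}(\RHom_{R}(S,X))\leq \Dpd_{R}(X)-\inf \s^{0}(S)=\Dpd_{R}(X),\]
which is the desired bound. For part (2), I would instead invoke part (2) of the theorem with the same $V=\s^{0}(S)$, using that $S\otimes_{R}^{L}X$ is represented by $\s^{0}(S)\otimes_{R}^{L}X$; this gives
\[\Dpd_{S}(S\otimes_{R}^{L}X)\leq \Dpd_{R}(X)+\pd_{S}(\s^{0}(S))=\Dpd_{R}(X).\]

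There is no genuine obstacle here: the corollary is a direct specialization, and the only point requiring care is the bookkeeping that $\inf S=0$ and $\pd_{S}(S)=0$ when $S$ is viewed over itself, both of which follow at once from $S$ being free of rank $1$ as an $S$-module. One may optionally remark that the excellent-extension hypothesis enters only through the preceding theorem (specifically through Lemma~\ref{lem2.28}), so that no further structural input on the extension $R\subseteq S$ is needed at this final step.
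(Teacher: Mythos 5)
Your proof is correct and is exactly the argument the paper intends: the corollary is the specialization $V=S$ of the preceding theorem, using that $S$ is finite free of rank one over itself, so $\inf S=0$ and $\pd_{S}(S)=0$. The paper gives no separate proof, treating it as immediate, and your bookkeeping fills in precisely the right details.
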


In \cite{Mao2008}, Mao and Ding introduced and studied Gorenstein FP-injective modules, and showed that there is a very close relationship between Gorenstein FP-injective modules and Gorenstein flat modules. A left $R$-module $N$ is called Gorenstein FP-injective if there is an exact sequence
\[\cdots \longrightarrow E_{1}\longrightarrow E_{0}\longrightarrow E_{-1}\longrightarrow E_{-2}\longrightarrow \cdots\]
of injective left $R$-modules with $N=\Coker (E_{0}\longrightarrow E_{-1})$ such that $\Hom (E, -)$ leaves the sequence exact whenever $E$ an FP-injective $R$-module. Since Gorenstein FP-injective modules have properties analogous to Gorenstein injective modules, Gillespie \cite{Gillespie2010} called these modules Ding injective modules.

\begin{remark}Above we have only mentioned the Ding projective dimension of $R$-complexes. Dually one can also define and study the Ding injective dimension for complexes of left $R$-modules over an associative ring $R$. All the results concerning Ding projective dimension have a Ding injective counterpart.
\end{remark}
\vspace{0.3cm} \hspace{-0.8cm}{\bf{Acknowledgements}}

The authors would like to thank the referee for valuable suggestions
and helpful corrections.

\end{document}